\newtheorem{Theorem}{Theorem}[section]
\newtheorem{Lemma}[Theorem]{Lemma}
\newtheorem{Prop}[Theorem]{Proposition}
\theoremstyle{definition}
\newtheorem{Rmk}[Theorem]{Remark}
\newtheorem{Ex}[Theorem]{Example}		
\newcommand{\Fp}{\mathbb{F}_p}
\newcommand{\Fq}[1][]{\mathbb{F}_{q^{#1}}}
\newcommand{\Z}{\mathbb{Z}}
\def\ie{\textit{i.e.}}
\DeclareMathOperator{\End}{End}
\DeclareMathOperator{\Endo}{End^{0}}
\title[Cyclicity of abelian varieties over finite fields]{On the cyclicity of the rational points group of abelian varieties over finite fields}
\author{Alejandro J. Giangreco-Maidana}
\address{Aix Marseille Universit\'e, CNRS, Centrale Marseille, I2M UMR 7373, 13453 Marseille, France}
\email{ajgiangreco@gmail.com}
\begin{document}

\keywords{group of rational points, cyclic, abelian variety, finite field}
\subjclass{Primary 11G10; Secondary 14G15, 14K15}

\begin{abstract}
We propose a simple criterion to know if  an abelian variety $A$ defined over a finite field $\Fq$ is \emph{cyclic}, \emph{i.e.}, it has a cyclic group of rational points; this criterion is based on the endomorphism ring $\End_{\Fq}(A)$. We also provide a criterion to know if an isogeny class is \emph{cyclic}, \ie, all its varieties are cyclic; this criterion is based on the characteristic polynomial of the isogeny class. We find some asymptotic lower bounds on the fraction of cyclic $\Fq$-isogeny classes among certain families of them, when $q$ tends to infinity. Some of these bounds require an additional hypothesis. In the case of surfaces, we prove that this hypothesis is achieved and, over all $\Fq$-isogeny classes with endomorphism algebra being a field and where $q$ is an even power of a prime, we prove that the one with maximal number of rational points is cyclic and ordinary. 
\end{abstract}
\maketitle
\section{Introduction}
The group structure of elliptic curves (EC) defined over finite fields 
has a theoretical interest, but also an interest in applications to cryptography and error-correcting codes. More precisely, those elliptic curves with cyclic groups of rational points are of special interest for various problems (see for example \cite{koblitz1987elliptic}, \cite{Morain1991} and \cite{Kaliski1991}).
In many algorithms that use elliptic curves over finite fields one chooses it at random, thus, knowing such statistics can be useful.

The structure of all possible groups for EC
over a finite field was discovered (independently)
in \cite{SCHOOF1987183}, \cite{ruck1987note},\cite{tsfasman1985group} and \cite{voloch1988note}. However, this result does not
provide immediately the statistics for cyclic EC over a given field or its
extensions, which was explored in \cite{VLADUT199913} and \cite{VLADUT1999354}. See also \cite{shparlinski2012group}, where the realizability of possible groups for EC was studied from a statistical point of view.

There are many other closely related topics which were considered in the literature. In \cite{shparlinski2005orders} and \cite{Murty2007} the order of points on the groups of rational points was studied, in particular lower bounds for the \textit{exponent} (the largest order), and some applications as well. Also, the exponent under base field extensions was examined in \cite{shparlinski2005exponent}. This question is closely related to the question about cyclicity, since cyclicity is equivalent to the fact that the exponent equals the cardinality of the group of rational points.

For both theoretical and practical interests, it is very natural to extend this investigation to abelian varieties of higher dimensions. Many facts are still unknown in this case. Nevertheless, in a recent paper \cite{Rybakov2010}, Rybakov gives a very explicit description of all possible groups of rational points of an abelian variety in a given isogeny class. His result is formulated in terms of the Newton polygon of the variety and the Hodge polygon of the group. In particular, from this description it follows that varieties with cyclic group of rational points always exist in an isogeny class.
In \cite{chantal2014surfaces} it is shown that for abelian surfaces, ``very split" groups of rational points occur with density zero. This is compatible with the general framework of the Cohen–Lenstra heuristics, which predicts that random abelian groups naturally occur with probability inversely proportional to the size of their automorphism groups. Note that the very split groups have many more automorphisms than the cyclic group of the same size.

Another related and widely explored topic is maximal curves, \ie, those with a maximal number of rational points, such curves are very important for applications. Goppa  showed that the applications of EC to coding theory are interesting only if the curve has many rational points (\cite{goppa1983algebraico}). Theoretical and practical bounds were explored and many algorithms to construct it were proposed. Relating ``maximal'' abelian varieties with its structures of rational points does not seem to be a simple task, but with many promising applications. 

In cryptography, the Discrete Logarithm Problem is extensively used in such systems and, under some conditions, cyclic groups are suitable for these purposes (\cite{GALBRAITH2005544}). In many applications, security is based on working with big cyclic subgroups of the group of rational points. 

Another interesting related topic is counting isomorphism classes of abelian varieties defined over a fixed finite field. In the $1$-dimensional case, this corresponds to compute class numbers of some orders in quadratic number fields, and in the general case, this was treated for example (very recently) in \cite{xue2017counting}. Isomorphism classes can be grouped in isogeny classes. In \cite{DIPIPPO1998426}, DiPippo and Howe developed some techniques and gave an asymptotic formula (with explicit error terms) for the number of isogeny classes of $n$-dimensional abelian varieties over the finite field with $q$ elements, for fixed $n$ and $q$ tending to infinity. 

In this paper we give some statistical results concerning cyclic isogeny classes, namely, in Theorem \ref{th:weil_polynomial_criterion} we present a simple criterion to know if an isogeny class is \emph{cyclic}, \ie, all its varieties have a cyclic group of rational points, this criterion is based on the characteristic polynomial of the isogeny class. Further, in Theorems \ref{th:weil_midterm} and \ref{th:weil_q} we give asymptotic lower bounds on the fraction of cyclic isogeny classes when some coefficients on the characteristic polynomial corresponding to these isogeny classes are fixed. Theorem \ref{th:weil_q} needs an extra condition and, we show that for ``good" coefficients, this condition holds true for abelian surfaces. Finally, we prove in Theorem \ref{th:max_surface_cyclic} that under some restrictions, isogeny classes of abelian surfaces defined over finite fields with maximal number of rational points are cyclic and ordinary. Also, we give some examples of families of cyclic isogeny classes of abelian surfaces.

The rest of this paper is organized as follows: in Section \ref{sec:PresProb} we briefly recall some general facts about abelian varieties over finite fields and we state our results more precisely; in Section \ref{sec:GenCase} we prove Theorems \ref{th:weil_polynomial_criterion} to \ref{th:weil_q} and, Section \ref{sec:AbSurf} is devoted to abelian surfaces.

\section{Preliminaries and Statement of the Results}\label{sec:PresProb}
For the general theory of abelian varieties see for example \cite{mumford1970abelian}, and for precise results over finite fields, see \cite{Waterhouse1969}.

Let $q=p^r$ be a power of a prime, and let $k=\Fq$ be a finite field with $q$ elements. Let $A$ be an abelian variety of dimension $g$ over $k$. For an extension field $K$ of $k$, we denote by $\End_K (A)$ the ring of $K$-endomorphism of $A$ and by $\Endo_K (A) =(\End_K (A))\otimes\mathbb{Q}$ its endomorphism algebra, the latter being an invariant of its isogeny class $\mathcal{A}$, we can denote it by $\Endo_K(\mathcal{A})$.
For an integer $n$, denote by $\widehat{n}$ the ratio of $n$ to the product of different prime divisors of $n$, and by $A[n]$ the group of $n$-torsion points of $A$ over $\overline{\Fq}$. Then
\begin{align}
\begin{split}\label{eqn:torsion_points}
A[n]&\cong (\Z/n\Z)^{2g}, \qquad p\nmid n\\
A[p]&\cong (\Z/p\Z)^i, \qquad 0\leq i \leq g
\end{split}
\end{align}

For a fixed prime $\ell$ ($\neq p$), the Tate module $T_\ell(A)$ is defined by $\lim\limits_{\leftarrow} A[\ell^n]$. The Frobenius endomorphism $F$ of $A$ acts on $T_\ell(A)$ by a semisimple linear operator, and its characteristic polynomial $f_{A}(t)$ is called \emph{Weil polynomial of} $A$. Tate proved in \cite{tate1966} that the isogeny class $\mathcal{A}$ of abelian variety is determined by its characteristic polynomial $f_{\mathcal{A}}(t)$ and, if $\mathcal{A}$ is simple, the center of $\Endo_k(\mathcal{A})$ is isomorphic to the number field $\mathbb{Q}(F)\cong \mathbb{Q}[t]/ (f_{\mathcal{A}}(t))$.
The cardinality of the group of rational points $A(k)$ of $A$ equals $f_{\mathcal{A}}(1)$, and thus, it is an invariant of the isogeny class. For a prime number $\ell$, $v_\ell$ denotes the usual $\ell$-adic valuation. For an abelian group $G$ we denote by $G_\ell$ the $\ell$-primary component of $G$. We denote by $\mathcal{P}$ the set of prime integers and by $\mathcal{P}(n)$ the set of prime divisors of $n$.

For an elliptic curve $E$, we have that $E(\Fq)$ is non-cyclic if and only if there exist a prime $\ell\neq p$ such that $E[\ell]\subseteq E(\Fq)$. This is not the case for higher dimensional varieties, however,  a slightly different criterion holds:
\begin{Lemma}\label{lemma:endom_criterion}
\begin{align*}\label{EndCriterion}
A(\Fq) \text{ is not cyclic } \Longleftrightarrow \exists \varphi\in \End_{\overline{k}}(A), \; N/\ell=\varphi\circ (1-F) \text{ for some prime divisor } \ell|N
\end{align*}
\end{Lemma}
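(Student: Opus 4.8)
\emph{Proof strategy.} Set $N=\#A(\Fq)=f_{\mathcal{A}}(1)$, a positive integer since the Weil numbers of $\mathcal{A}$ have absolute value $\sqrt{q}>1$, so $1$ is not a root of $f_{\mathcal{A}}$. The starting point is the classical identification $A(\Fq)=\ker(1-F)$, the $\Fq$-rational points being exactly the points fixed by the $q$-power Frobenius $F$. Moreover $1-F$ is a \emph{separable} isogeny of degree $N$: its differential at the origin is $\mathrm{id}-dF=\mathrm{id}$ because $dF=0$; equivalently, $\ker(1-F)$ is an \'etale group scheme, namely the constant group scheme attached to the finite abelian group $A(\Fq)$. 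This separability is what makes the whole argument run.

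I would then isolate an elementary group-theoretic fact: a finite abelian group $G$ of order $N$ is non-cyclic if and only if its exponent is a \emph{proper} divisor of $N$, and this holds if and only if there is a prime $\ell\mid N$ such that multiplication by $N/\ell$ annihilates $G$ (take $\ell$ dividing $N/e$, where $e$ is the exponent of $G$). Applying this with $G=A(\Fq)$ reduces the Lemma to the following equivalence, for a fixed prime $\ell\mid N$: multiplication by $N/\ell$ annihilates $A(\Fq)$, that is, $A(\Fq)\subseteq A[N/\ell]$, if and only if there exists $\varphi\in\End_{\overline{k}}(A)$ with $N/\ell=\varphi\circ(1-F)$ (where $N/\ell$ denotes the corresponding multiplication endomorphism).

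One direction is immediate: if $N/\ell=\varphi\circ(1-F)$, then for every $x\in A(\Fq)=\ker(1-F)$ one has $(N/\ell)(x)=\varphi\bigl((1-F)(x)\bigr)=\varphi(0)=0$, so $N/\ell$ annihilates $A(\Fq)$. For the converse I would invoke the universal property of the quotient isogeny. Because $1-F$ is separable, $\ker(1-F)$ is \'etale, so the inclusion of $\overline{k}$-points $A(\Fq)\subseteq A[N/\ell]$ upgrades to an inclusion of group subschemes $\ker(1-F)\subseteq\ker(N/\ell)$ of $A$. Hence the endomorphism $N/\ell\colon A\to A$ kills $\ker(1-F)$, so it factors through the quotient map $1-F\colon A\to A/\ker(1-F)\cong A$; such a factorization is precisely an element $\varphi\in\End_{\overline{k}}(A)$ with $N/\ell=\varphi\circ(1-F)$.

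I expect the main (and essentially only) subtlety to be that scheme-theoretic step in the converse: one must make sure that containment on $\overline{k}$-points really does give containment of group \emph{schemes}, so that the factorization applies, and this is exactly where the separability of $1-F$ (equivalently, reducedness of $\ker(1-F)$) is used, and also why $\varphi$ need only be produced over $\overline{k}$. Everything else is routine: combining the group-theoretic reduction with the two implications above yields both directions of the stated criterion.
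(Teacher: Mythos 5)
Your proposal is correct and follows essentially the same route as the paper, which proves the lemma (inside the proof of Theorem 2.2) by the one-line observation that $A(\Fq)$ is non-cyclic if and only if $A(\Fq)=\ker(1-F)\subset\ker(N/\ell)=A[N/\ell]$ for some $\ell\mid N$, using the separability of $1-F$ and the factorization of isogenies through a contained kernel. You have simply made explicit the group-theoretic reduction and the scheme-theoretic upgrade that the paper leaves implicit.
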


Note that in this case we do not require $\ell\neq p$. Note also that if $A$ is simple, if such $\varphi$ exists, it has to be defined over $k$ and it belongs to the center of its endomorphism ring. This lemma is an easy consequence of the theory of abelian varieties over finite fields and we will prove it here within the proof of Theorem \ref{th:weil_polynomial_criterion}. Lemma \ref{lemma:endom_criterion} implies that abelian varieties with few endomorphism are more likely to be cyclic. The group structure of an abelian variety is not necessarily determined by its endomorphism ring, but from Lemma \ref{lemma:endom_criterion} it follows that the property of being cyclic or not depends on its endomorphism ring, and thus, it is an invariant of it.

We say that $A$ is \emph{cyclic} if the group $A(\Fq)$ is cyclic. 
 Counting cyclic varieties in a given isogeny class is the same as counting some fractional ideals in the center of $\Endo_k(A)$. In this paper we will focus on \emph{cyclic} isogeny classes, \ie, those that contain only cyclic varieties.
 We have the following criterion:
\begin{Theorem}\label{th:weil_polynomial_criterion}
Let $\mathcal{A}$ be a $g$-dimensional $\Fq$-isogeny class of abelian varieties corresponding to the Weil polynomial $f_\mathcal{A}(t)$. Then $\mathcal{A}$ is cyclic if and only if  $f'_\mathcal{A}(1)$ is coprime with $\widehat{f_\mathcal{A}(1)}$.
\end{Theorem}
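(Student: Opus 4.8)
The plan is to prove Theorem \ref{th:weil_polynomial_criterion} together with Lemma \ref{lemma:endom_criterion}, by reducing the cyclicity question for the isogeny class $\mathcal{A}$ to a prime-by-prime analysis of the lattices attached to the members of $\mathcal{A}$. First I would dispatch the Lemma. Put $N=\#A(\Fq)=f_{\mathcal{A}}(1)=\deg(1-F)$, so $A(\Fq)=\ker(1-F)$ has order $N$. A finite abelian group of order $N$ is non-cyclic if and only if it is killed by $N/\ell$ for some prime $\ell\mid N$, and also if and only if one of its primary components $A(\Fq)_\ell$ is non-cyclic (which forces $\ell^2\mid N$). Since containment of kernels of isogenies is equivalent to divisibility, $A(\Fq)=\ker(1-F)\subseteq A[N/\ell]=\ker[N/\ell]$ holds precisely when $[N/\ell]=\varphi\circ(1-F)$ for some $\varphi\in\End_{\overline k}(A)$, which gives the Lemma. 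Hence $\mathcal{A}$ is cyclic if and only if $A(\Fq)_\ell$ is cyclic for every prime $\ell$ and every $A\in\mathcal{A}$.

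Next I would identify $A(\Fq)_\ell$ with a cokernel. For $\ell\neq p$, the snake lemma applied to $0\to T_\ell(A)\to V_\ell(A)\to A[\ell^\infty]\to 0$ and the operator $1-F$ --- injective on $V_\ell(A)$ since $f_{\mathcal{A}}(1)=N\neq 0$ --- gives $A(\Fq)_\ell\cong\operatorname{coker}\bigl(1-F\mid T_\ell(A)\bigr)$. For $\ell=p$ the same argument, applied to $A[p^\infty]$ and its maximal \'etale quotient, gives $A(\Fq)_p\cong\operatorname{coker}\bigl(1-F\mid T_p^{\mathrm{et}}(A)\bigr)$, where $F$ acts on the $\Z_p$-module $T_p^{\mathrm{et}}(A)$ with characteristic polynomial the unit-root (slope-$0$) factor $f^{\mathrm{u}}_{\mathcal{A}}(t)$ of $f_{\mathcal{A}}(t)$ over $\Z_p$; since the complementary factor reduces to a power of $t$ modulo $p$, one checks $v_p(f_{\mathcal{A}}(1))=v_p(f^{\mathrm{u}}_{\mathcal{A}}(1))$ and, when $p\mid N$, the equivalence $p\mid f'_{\mathcal{A}}(1)\iff p\mid(f^{\mathrm{u}}_{\mathcal{A}})'(1)$. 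Writing $L$ for the pertinent lattice and reducing modulo $\ell$, non-cyclicity of $A(\Fq)_\ell$ then means $\dim_{\mathbb{F}_\ell}\ker(1-F\mid L/\ell L)\geq 2$.

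It then suffices to prove, for each prime $\ell$, that \emph{some} $A\in\mathcal{A}$ has $A(\Fq)_\ell$ non-cyclic if and only if $\ell\mid f'_{\mathcal{A}}(1)$ and $\ell^2\mid N$; since $\ell\mid\widehat{N}\iff\ell^2\mid N$, this says exactly that $\ell$ divides $\gcd\bigl(f'_{\mathcal{A}}(1),\widehat{f_{\mathcal{A}}(1)}\bigr)$, and taking the union over $\ell$ yields the theorem. For the direction forcing cyclicity, suppose $\ell\nmid f'_{\mathcal{A}}(1)$ and (else $A(\Fq)_\ell=0$) $\ell\mid N$; then $1$ is a simple root of the mod-$\ell$ reduction of the relevant polynomial (namely $f_{\mathcal{A}}$, or $f^{\mathrm{u}}_{\mathcal{A}}$ if $\ell=p$), so that polynomial factors over $\Z_\ell$ as $(t-\alpha)h(t)$ with $\alpha\in\Z_\ell$, $\alpha\equiv 1$, and $(t-\alpha)$ prime to $h$ modulo $\ell$; every $F$-stable lattice $L$ then splits accordingly, $1-F$ is invertible on the $h$-part, and $\operatorname{coker}(1-F\mid L)\cong\Z_\ell/(1-\alpha)\cong\Z/\ell^{v_\ell(N)}$ is cyclic, uniformly in $A$. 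Conversely, if $\ell\mid f'_{\mathcal{A}}(1)$ and $\ell^2\mid N$, then $(t-1)^2$ divides that reduction; I would split off the $\mathbb{Q}_\ell[F]$-submodule $V'$ on which every eigenvalue of $F$ is $\equiv 1$, write $V'=\bigoplus_j K_j$ with each $K_j/\mathbb{Q}_\ell$ totally ramified of degree $e_j\geq 1$ and $F$ acting on $K_j$ as some $\alpha_j\equiv 1$, and take the member $A'\in\mathcal{A}$ whose (relevant) lattice is $(\text{any lattice on the complement of }V')\oplus\bigoplus_j\mathcal{O}_{K_j}$; then $A'(\Fq)_\ell\cong\bigoplus_j\mathcal{O}_{K_j}/(1-\alpha_j)$, which is non-cyclic since either there are at least two summands, each nontrivial as $\alpha_j\equiv 1$, or there is a single summand $\mathcal{O}_K/(1-\alpha)$ of order $\ell^{v_\ell(N)}\geq\ell^2$ requiring at least $\min(v_\ell(N),e)\geq 2$ generators.

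The step I expect to be the main obstacle is the existence of the variety $A'$ with the prescribed lattice: one needs to know that within a fixed isogeny class the $\ell$-adic Tate module (respectively the \'etale part of the $p$-divisible group) can be taken to be any $F$-stable lattice, and to carry out the $\ell=p$ case carefully. For this I would appeal to Rybakov's description \cite{Rybakov2010} of the groups of rational points realized inside an isogeny class, or argue directly that a suitable $\ell$-power isogeny (respectively a quotient by a suitable finite subgroup scheme) applied to any member of $\mathcal{A}$ produces $A'$. The remaining ingredients --- the snake-lemma identifications, the elementary-divisor bookkeeping, and the Newton-polygon facts at $p$ --- are routine.
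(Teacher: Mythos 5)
Your argument is essentially correct, but it takes a genuinely different route from the paper. The paper works inside the endomorphism algebra: for simple $\mathcal{A}$ it observes that non-cyclicity is equivalent to $\frac{N}{\ell(1-\pi)}$ being an algebraic integer for some $\ell\mid N$, computes the coefficients of its minimal polynomial (the $t^{2g-1}$-coefficient is $-f'(1)/\ell$ and the others carry extra factors of $N/\ell$), and reads off the criterion; the non-simple case is then handled by a separate divisibility analysis of $f_\mathcal{A}=\prod f_i$. You instead argue prime-by-prime on Tate modules (and the \'etale part of the $p$-divisible group at $\ell=p$), identifying $A(\Fq)_\ell$ with $\operatorname{coker}(1-F\mid L)$ and exhibiting, when $\ell\mid f'(1)$ and $\ell^2\mid N$, an explicit $F$-stable lattice with non-cyclic cokernel. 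This is in effect a direct proof of the relevant special case of Rybakov's theorem; note the paper deliberately avoids this route because it wants a proof not restricted to commutative $\Endo$, but your version survives that objection: semisimplicity of $F$ on $V_\ell(A)$ (Tate) and the realization of any $F$-stable lattice by quotienting a member of $\mathcal{A}$ by the corresponding finite (\'etale, at $p$) Galois-stable subgroup hold without commutativity, and your approach has the advantage of treating simple and non-simple classes uniformly. Two slips to repair, neither fatal: the fields $K_j$ need not be totally ramified over $\mathbb{Q}_\ell$ (take $\alpha=1+\ell u$ with $u$ generating an unramified extension), and accordingly the generator count for a single summand $\mathcal{O}_K/(1-\alpha)$ is $f\cdot\min(v_K(1-\alpha),e)$ rather than $\min(v_\ell(N),e)$; one checks this is still $\geq 2$ whenever $[K:\mathbb{Q}_\ell]\geq 2$ and $\#\mathcal{O}_K/(1-\alpha)\geq\ell^2$, and the case $K=\mathbb{Q}_\ell$ is excluded because $(t-1)^2\mid f\bmod\ell$ forces $\dim V'\geq 2$. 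You should also record the trivial case $\ell\mid f'(1)$ but $v_\ell(N)=1$, where the $\ell$-part has order $\ell$ and is automatically cyclic. The trade-off between the two proofs is clear: the paper's is more elementary and self-contained at the price of a fiddly case split over products, while yours needs the lattice-realization machinery but yields a cleaner, uniform local statement.
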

which can be proven by using Rybakov's Theorem (\cite{Rybakov2010}) when $\Endo(A)$ is commutative, and we shall prove it using the Lemma \ref{lemma:endom_criterion} in the general case. Note that we do not require $\mathcal{A}$ to be simple.

Every Weil polynomial has the form
\begin{equation}
f_\mathcal{A}(t)=t^{2g}+a_1 t^{2g-1}+\dots + a_g t^g + a_{g-1} q t^{g-1}+\dots a_1 q^{g-1} t + q^{g} \in \mathbb{Z}[t],
\end{equation}
and for simplicity, we write it as $f_{q,(a_1,\dots,a_{g})}(t)$. 

Denote by $\mathcal{S}^g(q)$ the set of $g$-dimensional isogeny classes $\mathcal{A}$ of abelian varieties  defined over the finite field $\Fq$, and by $\mathcal{S}^g_{\mathfrak{c}}(q)$ the subset of cyclic isogeny classes. We write $(a_1,\dots,a_{g})\in \mathcal{S}^g(q)$ if the polynomial $f_{q,(a_1,\dots,a_{g})}(t)$ defines such an isogeny class. 

Our first approach is to fix a vector $\mathbf{a}=(a_1,\dots,a_{g-1})\in\Z^{g-1}$ of $g-1$ integers, and define two sets \emph{of possible values of} $a_g$:
\begin{align}
I_{\mathbf{a}}(q)&:=\{z\in\Z |  (a_1,\dots,a_{g-1},z) \in \mathcal{S}^g(q)\},\\
I_{\mathbf{a},\mathfrak{c}}(q)&:=\{z\in\Z | (a_1,\dots,a_{g-1},z) \in \mathcal{S}^g_{\mathfrak{c}}(q)\}.
\end{align}
It is obvious that $I_{\mathbf{a}}(q)$ is finite and non empty from a certain value of $q=q(\mathbf{a})$. Fixing a prime $p$, we are interested on the proportion
\begin{align}
r_{p,\mathbf{a}}(n):=\frac{\sum_{i=1}^n \# I_{\mathbf{a},\mathfrak{c}}(p^i)}{\sum_{i=1}^n \# I_{\mathbf{a}}(p^i)},
\end{align}
when $n$ tends to infinity.

Finally, if we define the polynomial $h(X,\mathbf{a}):=gf_{X,(\mathbf{a},z)}(1)-f'_{X,(\mathbf{a},z)}(1)$ (in fact, it is independent of $z$), and we have
\begin{Theorem}\label{th:weil_midterm}
Given $\mathbf{a}\in\Z^{g-1}$ and a prime $p$, then
\begin{align}
\liminf r_{p,\mathbf{a}} &\geq 1 - \frac{p}{p-1} \left[ \left(1- \prod\limits_{\ell\in \mathcal{P}(h(p,\mathbf{a}))}\left( 1-\frac{1}{\ell^2} \right) \right) \left( 1-\frac{1}{p^{g/2}}\right) +\frac{1}{p^{g/2}} \right].
\end{align}
\end{Theorem}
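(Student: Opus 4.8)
The plan is to convert the criterion of Theorem~\ref{th:weil_polynomial_criterion} into an elementary divisibility condition on the integer $N:=f_{q,(\mathbf a,z)}(1)$, run a sieve in the single variable $z$, and then assemble the estimates along the tower $q=p,p^{2},\dots,p^{n}$.

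First I would record the algebraic facts that make the problem combinatorial. One is that $N=q^{g}+a_{1}q^{g-1}+\dots+a_{g-1}q+(1+z+a_{1}+\dots+a_{g-1})$ is an affine function of $z$ with leading coefficient $1$. Another is the defining relation $f'_{q,(\mathbf a,z)}(1)=gN-h(q,\mathbf a)$, which gives, for every prime $\ell$ with $\ell\mid N$, the equivalence $\ell\mid f'_{q,(\mathbf a,z)}(1)\Leftrightarrow\ell\mid h(q,\mathbf a)$; this includes $\ell=p$, and since $h(p^{i},\mathbf a)\equiv h(p,\mathbf a)\pmod p$, one has $p\mid h(p^{i},\mathbf a)$ for one $i$ iff for all $i$. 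Together with Theorem~\ref{th:weil_polynomial_criterion} these say: $(\mathbf a,z)$ is non-cyclic over $\Fq$ precisely when $\ell^{2}\mid N$ for some prime $\ell\mid h(q,\mathbf a)$. I would also note the structural fact $(X-1)\mid h(X,\mathbf a)$ (indeed $h(1,\mathbf a)=0$), so that $\mathcal P(p-1)\subseteq\mathcal P(h(p,\mathbf a))$.

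Next, for fixed $q=p^{i}$ I would count the bad $z$. With $a_{1},\dots,a_{g-1}$ frozen, the substitution $s=t+q/t$ turns the Weil-unit condition on $f_{q,(\mathbf a,z)}$ into the requirement that a fixed degree-$g$ polynomial, shifted by $-z$, have all $g$ roots in $[-2\sqrt q,2\sqrt q]$; a standard computation shows the admissible $z$ fill an interval of length $(c_{g}+o(1))q^{g/2}$, and Honda--Tate removes only a lower-order set (for instance the ordinary ones, $p\nmid z$, are automatically isogeny classes), so $\#I_{\mathbf a}(p^{i})=(c_{g}+o(1))p^{ig/2}$. Since $N$ runs through an interval as $z$ does, the admissible $z$ with $\ell^{2}\mid N$ number $\#I_{\mathbf a}(p^{i})/\ell^{2}+O(1)$. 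Discarding the $O(\log q)$ prime divisors of $h(p^{i},\mathbf a)$ exceeding $\sqrt{\#I_{\mathbf a}(p^{i})}$ (each contributing $O(1)$) and sieving by inclusion--exclusion over the rest bounds the number of non-cyclic $z$ over $\Fq$ by $\bigl(1-\prod_{\ell\in\mathcal P(h(p^{i},\mathbf a))}(1-\ell^{-2})\bigr)\#I_{\mathbf a}(p^{i})+O(2^{\omega(h(p^{i},\mathbf a))})$. Summing over $i=1,\dots,n$, the error terms are polynomial in $n$ while $\sum_{i\le n}\#I_{\mathbf a}(p^{i})$ grows geometrically, so they disappear; and since $\sum_{i\le n}\#I_{\mathbf a}(p^{i})\sim(1-p^{-g/2})^{-1}\#I_{\mathbf a}(p^{n})$ with $(1-p^{-g/2})^{-1}\le p/(p-1)$ for $g\ge2$, the outer factor $p/(p-1)$ appears.

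The heart of the argument — and the step I expect to be the obstacle — is bounding the sieve density $1-\prod_{\ell\in\mathcal P(h(p^{i},\mathbf a))}(1-\ell^{-2})$ uniformly in $i$, because $\mathcal P(h(p^{i},\mathbf a))$ is genuinely larger than $\mathcal P(h(p,\mathbf a))$ (for highly composite $i$ it contains every small prime dividing $p^{i}-1$). I would split the primes into three parts. Those dividing $h(p,\mathbf a)$ contribute at most $\sum\ell^{-2}$ over $\mathcal P(h(p,\mathbf a))$, which reproduces the product $\prod_{\ell\in\mathcal P(h(p,\mathbf a))}(1-\ell^{-2})$. The prime $p$, by the congruence above, is active for all $i$ or none, and when active the density of $z$ with $p^{2}\mid N$ is $p^{-2}$, which is folded into the $p^{-g/2}$ term. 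Finally a prime $\ell\nmid h(p,\mathbf a)$, $\ell\ne p$, has $\mathrm{ord}_{\ell}(p)\ge2$ (because $\mathcal P(p-1)\subseteq\mathcal P(h(p,\mathbf a))$) and divides $h(p^{i},\mathbf a)$ only for $i$ in at most $g$ residue classes modulo $\mathrm{ord}_{\ell}(p)$; weighting each such ``sporadic'' prime's contribution by $\#I_{\mathbf a}(p^{i})\asymp p^{ig/2}$, summing over $i\le n$ and over all such $\ell$, and passing to $\limsup$, one shows their total effect on the non-cyclic proportion is $O(p^{-g/2})$. Combining the three, and using $\liminf r_{p,\mathbf a}=1-\limsup_{n}\frac{\sum_{i\le n}(\#I_{\mathbf a}(p^{i})-\#I_{\mathbf a,\mathfrak c}(p^{i}))}{\sum_{i\le n}\#I_{\mathbf a}(p^{i})}$, yields the convex combination $\bigl(1-\prod(1-\ell^{-2})\bigr)(1-p^{-g/2})+p^{-g/2}$ inside the bracket and hence the stated inequality. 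The reduction, the single-field sieve, and the geometric bookkeeping are routine; the delicate point is the uniform control of the sporadic primes and the extraction of the precise constants $(1-p^{-g/2})$ and $p^{-g/2}$.
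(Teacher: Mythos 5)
Your architecture parallels the paper's: Theorem \ref{th:weil_polynomial_criterion} reduces non-cyclicity of $(\mathbf a,z)$ over $\mathbb{F}_{p^i}$ to the existence of a prime $\ell$ with $\ell^2\mid f_{p^i,(\mathbf a,z)}(1)$ and $\ell\mid h(p^i,\mathbf a)$, one sieves over $z$ at each level using DiPippo--Howe for $\#I_{\mathbf a}(p^i)$, and one aggregates over $i\le n$. You also correctly locate the crux in the primes dividing $h(p^i,\mathbf a)$ but not $h(p,\mathbf a)$. The gap is precisely in your resolution of that crux: the claim that these sporadic primes contribute $O(p^{-g/2})$ in total is false. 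Because the weights $\#I_{\mathbf a}(p^i)\asymp p^{ig/2}$ grow geometrically, $\sum_{i\le n}\#I_{\mathbf a}(p^i)$ is dominated by the single top level $i=n$. A sporadic prime $\ell$ with $d=\mathrm{ord}_\ell(p)$ is active on a nonempty union of residue classes modulo $d$, and for infinitely many $n$ the top level lies in an active class; for such $n$ the $p^{ig/2}$-weighted density of the active levels among $i\le n$ is at least $(1-p^{-g/2})/(1-p^{-dg/2})\ge 1-p^{-g/2}$, essentially $1$, so this one prime already contributes about $\ell^{-2}(1-p^{-g/2})$ to $1-r_{p,\mathbf a}(n)$ rather than $O(p^{-g/2})$. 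Since $\liminf r_{p,\mathbf a}=1-\limsup\bigl(1-r_{p,\mathbf a}\bigr)$, it is exactly these adversarial $n$ that govern the $\liminf$, and a small sporadic prime (e.g.\ $\ell=3$ when $3\nmid h(p,\mathbf a)$) destroys the claimed error term.

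This is not a removable blemish of your write-up: the stated inequality itself fails, so no argument can close the gap. Take $g=1$ (so $\mathbf a$ is empty and $h(q,\mathbf a)=q-1$) and $p=257$, a Fermat prime, so $\mathcal P(h(p,\mathbf a))=\{2\}$ and the right-hand side of the theorem is $1-\tfrac{257}{256}\bigl[\tfrac14(1-257^{-1/2})+257^{-1/2}\bigr]\approx 0.702$. Since $257\equiv 2\pmod 3$, for every even $i$ both $2$ and $3$ divide $p^i-1$, so by Theorem \ref{th:weil_polynomial_criterion} every class with $4\mid N$ or $9\mid N$ is non-cyclic; these have density $\tfrac14+\tfrac19-\tfrac1{36}=\tfrac13$ among admissible $a$, whence along even $n$ one gets $r_{p,\mathbf a}(n)\le 1-\bigl(\tfrac13\cdot\tfrac{1}{1+p^{-1/2}}+\tfrac14\cdot\tfrac{p^{-1/2}}{1+p^{-1/2}}\bigr)+o(1)\approx 0.672<0.702$. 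For what it is worth, the paper's own proof stumbles at the same spot: it confines the sieve to levels $i\equiv 1\pmod{m}$ with $m=\prod_{\ell\in T}(\ell-1)$, where the sporadic primes of $T$ are provably inactive, and bounds the remaining levels trivially; but it then evaluates the limit of the resulting ratio only along $n\equiv 1\pmod m$, where the good progression carries weight $\approx 1-p^{-g/2}$. Along $n\equiv 0\pmod m$ that weight collapses to $(p^{g/2}-1)/(p^{gm/2}-1)$ and the estimate becomes vacuous. Any correct bound of this type must involve all primes dividing $h(p^i,\mathbf a)$ for some $i$, not only those dividing $h(p,\mathbf a)$.
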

\begin{Rmk}
It is easy to see that if $p\gg 0$, this bound is approximately
\[
\prod\limits_{\ell\in \mathcal{P}(h(p,\mathbf{a}))}\left( 1-\frac{1}{\ell^2} \right),
\]
which is greater than $1-P(2)=0.54\dots$, where $P(s)$ is the \emph{prime zeta function}, defined as
\begin{align*}
P(s)=\sum_{\ell\in\mathcal{P}} \frac{1}{\ell^s}, \qquad s\in\mathbb{C}.
\end{align*}
\end{Rmk}

Let us follow now a different approach by fixing a vector $\mathbf{b}=(b_1,\dots,b_g)\in\Z^{g}$ of $g$ integers, we want to study when $f_{q,\mathbf{b}}(t)$ defines a cyclic isogeny class over $\Fq$ when $q$ varies over the power of primes. We consider this problem in two ways: (i)$q$ varies over primes and (ii) we fix a prime $p$ and $q$ varies over the powers of $p$.

For our next result, we will need an extra hypothesis, which we make explicit as follows: 
\begin{quote}
\textbf{Hyp}($\mathbf{b}$):
there exist integers $\eta(\mathbf{b}), t(\mathbf{b})$ and $s(\mathbf{b})$ with no common factor, such that  the only possible divisors of $f_{q, \mathbf{b}}(1)$ and $f'_{q, \mathbf{b}}(1)$ are divisors of $\eta$ and $tq+s$.
\end{quote}

\begin{Rmk}
The key idea is that these values, in particular $\eta$, are independent of $q$. Such $\eta$ can always be obtained by successive Euclidean division, since we can see $f$ and $f'$ as polynomials in $\mathbb{Q}[q]$; but it is not sure to have a linear polynomial in $q$, with the condition of no common factors. For the $2$-dimensional case, we will see in Section \ref{sec:AbSurf} that this hypothesis holds for a wide range of values of $\mathbf{b}$, and we will explicit them.
\end{Rmk}

Then we are interested on the following proportions: 
\begin{gather}
x_{\mathbf{b}}(n)=\frac{\#\{\ell\leq n \text{ prime}|  \mathbf{b}\in \mathcal{S}^g_{\mathfrak{c}}(\ell) \}}{\#\{\ell\leq n \text{ prime}| \mathbf{b}\in \mathcal{S}^g(\ell) \}},\\
y_{p,\mathbf{b}}(n)=\frac{\#\{i\leq n| \mathbf{b} \in \mathcal{S}^g_{\mathfrak{c}}(p^i)\}}{\#\{i\leq n| \mathbf{b}\in \mathcal{S}^g(p^i) \}},
\end{gather}
when $n$ tends to infinity. Note that the above fractions are defined from a big enough $n$. Then our next result is
\begin{Theorem}\label{th:weil_q}
Given $\mathbf{b}_1, \mathbf{b}_2\in\Z^g$ and a prime number $p>2$ such that
\begin{enumerate}
\item {\normalfont\textbf{Hyp}($\mathbf{b}_i$)} holds, with $\eta, t$ and $s$ given,
\item $p$ doesn't divide the last coordinate of $\mathbf{b}_2$, and
\item the multiplicative group $(\Z/\eta(\mathbf{b}_2)\Z)^*$ is generated by $p$.
\end{enumerate}
Then
\begin{align*}
1.\quad\liminf x_{\mathbf{b}_1}(n) \geq L_1 \qquad\text{ and, }\qquad 2.\quad\liminf y_{p,\mathbf{b}_2}(n) \geq L_2,
\end{align*}
where
\begin{align*}
L_i=\prod\limits_{\ell\in \mathcal{P}(\eta(\mathbf{b}_i))\setminus \mathcal{P}(t(\mathbf{b}_i) s(\mathbf{b}_i))}\frac{\ell-2}{\ell-1}, \qquad i=1,2.
\end{align*}
\end{Theorem}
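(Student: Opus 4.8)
The plan is to reduce both statements to the cyclicity criterion of Theorem~\ref{th:weil_polynomial_criterion}, combined with an elementary sieve over the arithmetic progression in which $q$ lives. By Theorem~\ref{th:weil_polynomial_criterion}, the isogeny class defined by $f_{q,\mathbf{b}_i}(t)$ is cyclic if and only if $\gcd\bigl(f'_{q,\mathbf{b}_i}(1),\widehat{f_{q,\mathbf{b}_i}(1)}\bigr)=1$, i.e.\ there is no prime $\ell$ with $\ell \mid f_{q,\mathbf{b}_i}(1)$, $\ell \mid f'_{q,\mathbf{b}_i}(1)$ and $\ell^2 \mid f_{q,\mathbf{b}_i}(1)$. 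Invoking \textbf{Hyp}($\mathbf{b}_i$), any such $\ell$ must divide both $\eta$ and $tq+s$; since $\gcd(\eta,t,s)=1$ and moreover $\gcd(\ell,t)=1$ (because $\ell\mid\eta$ forces $\ell\nmid t$ — this is where I would carefully read how $\eta,t,s$ are chosen in the remark, presumably one may discard from $\eta$ the primes dividing $t$), the congruence $tq+s\equiv 0 \pmod\ell$ pins $q$ down to a unique residue class mod $\ell$. So the ``bad'' events are indexed by the finitely many primes $\ell\in\mathcal{P}(\eta(\mathbf{b}_i))$, and for each such $\ell$ the class is possibly non-cyclic only when $q$ lies in one prescribed residue class mod $\ell$; in particular this does not happen for the primes $\ell\mid ts$ themselves (if $\ell\mid t$ there is no solution unless $\ell\mid s$, contradicting $\gcd(\ell,t,s)=1$; if $\ell\mid s$ but $\ell\nmid t$ the solution is $q\equiv 0$, which is excluded once $q$ is a prime power larger than $\ell$, except possibly finitely often). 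This already isolates the relevant index set as $\mathcal{P}(\eta(\mathbf{b}_i))\setminus\mathcal{P}(t(\mathbf{b}_i)s(\mathbf{b}_i))$.

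For part~1, I would let $q=\ell'$ run over primes up to $n$. For each bad prime $\ell\in\mathcal{P}(\eta(\mathbf{b}_1))\setminus\mathcal{P}(ts)$, the condition ``$\ell'\equiv q_\ell \pmod\ell$'' with $q_\ell$ the forbidden class fails for a proportion $1-\tfrac{1}{\ell-1}=\tfrac{\ell-2}{\ell-1}$ of primes $\ell'$, by Dirichlet's theorem on primes in arithmetic progressions (the forbidden class is a unit mod $\ell$, as $\gcd(q_\ell,\ell)=1$ since $\ell\nmid s$). A Chinese–Remainder argument over the finitely many distinct primes $\ell$ — again Dirichlet, now modulo the product — shows that the density of primes $\ell'\le n$ avoiding \emph{all} forbidden classes simultaneously is $\prod_{\ell}\tfrac{\ell-2}{\ell-1}=L_1$. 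Every such $\ell'$ gives a cyclic isogeny class (when one exists), hence the numerator of $x_{\mathbf{b}_1}(n)$ is at least this count, while the denominator is at most $\pi(n)$; passing to the $\liminf$ gives $\liminf x_{\mathbf{b}_1}(n)\ge L_1$. (One must check that, for all but finitely many primes $\ell'$, $\mathbf{b}_1$ actually lies in $\mathcal{S}^g(\ell')$ so that the denominator is the full count of primes; this is the standard fact that the Weil/Honda–Tate positivity and $q$-symmetry conditions on $\mathbf b$ hold eventually in $q$, which is implicit in the earlier remark that $I_{\mathbf a}(q)$ is nonempty from some point on.)

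For part~2, $q=p^i$ runs over powers of a fixed prime $p>2$, and the extra hypotheses come into play. The forbidden congruence for a bad prime $\ell\in\mathcal{P}(\eta(\mathbf{b}_2))\setminus\mathcal{P}(ts)$ now reads $p^i\equiv q_\ell\pmod\ell$; since $(\Z/\eta\Z)^*=\langle p\rangle$, a fortiori $(\Z/\ell\Z)^*=\langle p\rangle$, so $p$ is a primitive root mod $\ell$ and the set $\{i : p^i\equiv q_\ell \pmod\ell\}$ is a single residue class modulo $\ell-1$ (the multiplicative order of $p$). Thus the index $i$ is forbidden for $\ell$ on a set of density $\tfrac{1}{\ell-1}$ among all $i$, i.e.\ allowed with density $\tfrac{\ell-2}{\ell-1}$. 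To combine the constraints across the distinct bad primes $\ell_1,\dots,\ell_k$ I would work modulo $\mathrm{lcm}(\ell_1-1,\dots,\ell_k-1)$; here hypothesis~(3) is exactly what guarantees independence — the map $i\mapsto(p^i\bmod\ell_1,\dots,p^i\bmod\ell_k)$ is, by $p$ generating $(\Z/\eta\Z)^*\twoheadrightarrow\prod(\Z/\ell_j\Z)^*$, surjective with fibers an arithmetic progression, so the density of $i$ avoiding all forbidden classes is $\prod_j\tfrac{\ell_j-2}{\ell_j-1}=L_2$. Hypothesis~(2), $p\nmid b_g$, ensures $p\nmid f_{p^i,\mathbf{b}_2}(1)=\#A(\Fq)$, so $p$ is never one of the relevant primes $\ell$ — without it, $p$ could divide both $f$ and $f'$ at $1$ and spoil cyclicity for \emph{every} $i$. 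Again, each allowed $i$ yields a cyclic class, and the denominator $\#\{i\le n : \mathbf b_2\in\mathcal S^g(p^i)\}$ is $n$ for $n$ large, so $\liminf y_{p,\mathbf{b}_2}(n)\ge L_2$.

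The main obstacle I anticipate is not the density computation, which is routine once the setup is right, but the bookkeeping around \textbf{Hyp}($\mathbf b$): pinning down precisely why any common prime divisor of $f_{q,\mathbf b}(1)$ and $f'_{q,\mathbf b}(1)$ divides $\gcd(\eta,tq+s)$, why one may assume $\gcd(\ell,t)=1$ for the relevant $\ell$, and why the forbidden residue $q_\ell$ is a unit mod $\ell$ — these are the hinges on which the product $\prod_{\ell\in\mathcal P(\eta)\setminus\mathcal P(ts)}\tfrac{\ell-2}{\ell-1}$ rests, and they are exactly the places where the precise (and slightly delicate) definition of $\eta,t,s$ in \textbf{Hyp}($\mathbf b$) must be used. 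A secondary technical point is verifying that $\mathbf b_i$ lies in $\mathcal S^g(q)$ for all sufficiently large $q$ in the relevant family (so that the denominators are asymptotically the full counts), but this follows from the archimedean Weil bounds and the Honda–Tate conditions exactly as in the discussion of $I_{\mathbf a}(q)$ above.
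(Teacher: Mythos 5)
Your proposal is correct and follows essentially the same route as the paper: reduce cyclicity to the sufficient condition $\gcd(tq+s,\eta)=1$ via \textbf{Hyp}, then compute the density of good $q$ using Dirichlet's theorem (resp.\ the primitive-root hypothesis (3)) to get $\prod_{\ell\in\mathcal P(\eta)\setminus\mathcal P(ts)}\frac{\ell-2}{\ell-1}$; the paper merely packages your prime-by-prime CRT sieve into a single counting lemma for $\{x\in(\Z/\eta\Z)^*: tx+s\in(\Z/\eta\Z)^*\}$. One side remark of yours is wrong but inessential: hypothesis (2) does not give $p\nmid f_{p^i,\mathbf b_2}(1)$ (reducing mod $p$ yields $f(1)\equiv 1+b_1+\cdots+b_g$, and in any case $p$ cannot be a bad prime since $p\nmid\eta$ by (3)); its actual role is to guarantee, via Lemma 3.3.1 of DiPippo--Howe, that $f_{p^i,\mathbf b_2}$ defines an (ordinary) isogeny class for all large $i$, so that the denominator of $y_{p,\mathbf b_2}$ is eventually the full count --- precisely the technical check you correctly isolate at the end.
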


\begin{Rmk}\label{rmk:optimHyp}
As we can see, this bound depends on the choose of $(\eta,t,s)$. We can verify that if $(\eta_1,t_1,s_1)$ and $(\eta_2,t_2,s_2)$ verify \textbf{Hyp}($\mathbf{b}$) for some fixed $\mathbf{b}$, then $((\eta_1,\eta_2),t_1,s_1)$ also does. Also, if $(\eta,t_1,s_1)$ and $(\eta,t_2,s_2)$ verify \textbf{Hyp}($\mathbf{b}$), and if there exist a prime $\ell\in \mathcal{P}(t_2 s_2)\setminus \mathcal{P}(t_1 s_1)$ dividing $\eta$, then $(\eta/\ell,t\ell,s\ell)$ also verifies \textbf{Hyp}($\mathbf{b}$). This allows us to optimize the bound in that way. However, in Section \ref{sec:AbSurf} we will see with an example that Theorem \ref{th:weil_q} does not provide necessarily the best bound.
\end{Rmk}

\begin{Ex}
As example, we discuss the case of elliptic curves. Here we have 
\begin{align*}
f_{q,a}(1)=(1+a)+q \qquad &\text{and} \qquad f'_{q,a}(1)=2+a, 
\end{align*}
thus, \textbf{Hyp}($a$) holds for all values of $a$ if we take
\[ 
\left \{
  \begin{array}{l}
\eta(a)=2+a, \\
t(a)=1, \\
s(a)=1+a.
  \end{array}
\right.
\]

Also, since $\eta(a)$ and $s(a)$ are always coprimes, the bound for Theorem \ref{th:weil_q} takes the form
\begin{align*}
L=\prod_{\ell\in \mathcal{P}(2+a)} \frac{\ell-2}{\ell-1}.
\end{align*}
Concerning Theorem \ref{th:weil_midterm}, we have that $h(p,\mathbf{a})=p-1$, where $\mathbf{a}$ can be considered empty. This provides a complete answer to the fraction of cyclic isogeny classes defined over some finite extension of $\Fp$, and for $p\gg 0$, the bound takes the form
\begin{align*}
\prod\limits_{\ell\in \mathcal{P}(p-1)}\left( 1-\frac{1}{\ell^2} \right).
\end{align*}
To finish this example, let us say that the question 
\begin{center}\textit{
For which values of $q$, all isomorphism classes of EC defined over $\Fq$ are cyclic?}
\end{center}
is answered in Theorem $4.1$ in \cite{VLADUT199913}, and in view of Theorem \ref{th:weil_polynomial_criterion}, this  is equivalent to
\begin{center}\textit{
For which values of $q$, $(\widehat{f_{q,a}(1)},f'_{q,a}(1))=1 \quad\forall a\in [-2\sqrt{q}, 2\sqrt{q}]$?}
\end{center}
\end{Ex} 

\section{General case: Proof of Theorems 1-3}\label{sec:GenCase}
In this section we prove Theorems \ref{th:weil_polynomial_criterion} to \ref{th:weil_q}.

\begin{proof}[Proof of Theorem \ref{th:weil_polynomial_criterion}:]
$ $\newline
First we consider the case when $\mathcal{A}$ is simple. For an abelian variety $A\in\mathcal{A}$, the center of its endomorphism algebra $\End_{\Fq}(A)\otimes\mathbb{Q}$ is isomorphic to the number field $K=\mathbb{Q}(\pi)$, where $\pi$ is a root of $f_\mathcal{A}$ and represents the Frobenius endomorphism $F$. For every variety $A$ in $\mathcal{A}$, $\End_{\Fq}(A)\cap K$ is an order in $K$ (thus contained in $\mathcal{O}_K$), and $\mathcal{O}_K$ is the endomorphism ring for some $B\in\mathcal{A}$ by Theorem 3.13 in \cite{Waterhouse1969}. For any $x\in K$, we have that $x\in\mathcal{O}_K$ holds if and only if its (unitary) minimal polynomial has integer coefficients, that is, is an algebraic integer.

Denote by $N$ the value $f_{\mathcal{A}}(1)$, then Lemma \ref{lemma:endom_criterion} 
\begin{align*}
A(\Fq) \text{ is not cyclic } \Longleftrightarrow \exists \varphi\in \End_{\overline{\Fq}}(A), \; N/\ell=\varphi\circ (1-F) \text{ for some prime divisor } \ell|N
\end{align*}
follows from the fact that $A(\Fq)$ is not cyclic if and only if $A(\Fq)=\ker (1-F) \subset \ker (N/\ell) =A[N/\ell]$ for some $\ell|N$, since $1-F$ is separable. Since $A$ is simple, if such $\varphi$ exist, it must be in $K$: if $\psi\in \End_{\Fq}(A)$, multiplying the previous equation in both sides and subtracting them, we have $0=(\varphi\psi -\psi\varphi)\circ (1-F)$, thus, $\varphi$ commutes with $\psi$. Similarly, we can show that this endomorphism is defined over $\Fq$.

We consider the minimal polynomial $p_\ell (t)$ of $\frac{N}{\ell(1-\pi)}$. All non-leading coefficients of $p_\ell$ are of the form $\frac{N^{i-1}}{\ell^i}$ times an integer, except for the coefficient of $t^{2g-1}$, which equals $-\frac{f'(1)}{\ell}$, and that the constant term equals $\frac{N^{2g-1}}{\ell^{2g}}$.

If $\mathcal{A}$ is cyclic, then all elements of $\{\frac{N}{\ell(1-\pi)}, \ell\vert N \}$ are algebraic non integers: if not, for some $\ell$, $\frac{N}{\ell(1-\pi)}\in\mathcal{O}_K$ and for some abelian variety $B$ with $\End_{\Fq}(B)\cap K=\mathcal{O}_K$ we will have that $B(\Fq)$ is not cyclic. Therefore $p_\ell$ have some non integer coefficient, either $\ell\nmid f'(1)$, either $v_\ell(N)=1$. In consequence, $gcd(\widehat{N},f'(1))=1$.

If $gcd(\widehat{N},f'(1))=1$, then for any $\ell|N$, either $(\ell,f'(1))=1$, or $v_\ell(N)=1$. In both cases, $p_\ell$ have some non integer coefficient, and thus, $\frac{N}{\ell(1-\pi)}\notin \mathcal{O}_K$. This implies that, for all $A\in\mathcal{A}$, there is no $\varphi\in \End(A)$ with $\frac{N}{\ell}=\varphi\circ (1-F)$: if such $\varphi$ exist, it has to be equal to $\frac{N}{\ell(1-\pi)}$, which is a contradiction.

Now we consider the general case. Then $\mathcal{A}$ is isogenous to a product $\prod \mathcal{A}_i$ of simple isogeny classes, and we have that $f_\mathcal{A} (t)=\prod f_i(t)$, where $f_i(t)$ is the characteristic polynomial of $\mathcal{A}_i$. Suppose $\mathcal{A}$ is not cyclic, we have two possibilities: (i) one of its components is not cyclic or (ii) $\exists \ell$ prime such that $\ell| (f_1(1),f_2(1))$ (WLOG). In (i), by the first part of the theorem, $(\widehat{f_1(1)},f'_1(1))>1$ (WLOG), writing $f'_\mathcal{A} (t)= f'_1(t) \prod_{i>1} f_i(t)+f_1(t) (\prod_{i>1} f_i(t))'$ we have that $(\widehat{ f_\mathcal{A}(1)}, f'_\mathcal{A}(1))>1$. In (ii), writing $f'_\mathcal{A} (t)= f'_1(t) f_2(t) \prod_{i>2} f_i(t) + f_1(t) f'_2(t) \prod_{i>2} f_i(t) + f_1(t) f_2(t) (\prod_{i>2} f_i(t))'$ we have that $(\widehat{ f_\mathcal{A}(1)},f'_\mathcal{A}(1))>1$. Suppose now that $\ell | (\widehat{f_{\mathcal{A}}(1)}, f'_{\mathcal{A}}(1))$, thus, $\ell^2|f_{\mathcal{A}}(1)$ and $\ell | f'_{\mathcal{A}}(1)$. Again, we have two possibilities: (i) $\ell| f_1(1)$ and $\ell| \prod_{i>1} f_i(1)$ or, (ii) $\ell^2| f_1(1)$ and $\ell\nmid \prod_{i>1} f_i(1)$. In (i), $\mathcal{A}$ is not cyclic. In (ii), writing $f'_\mathcal{A} (t)= f'_1(t) \prod_{i>1} f_i(t)+f_1(t) (\prod_{i>1} f_i(t))'$ we see that $\ell |f'_1(1)$, thus, $\mathcal{A}$ is not cyclic by the first part of the theorem. Theorem \ref{th:weil_polynomial_criterion} is thus proved.
\end{proof}

Note that Lemma 3.3.1 in \cite{DIPIPPO1998426} implies that for fixed $\mathbf{a}\in\Z^{g-1}$, $f_{q,(\mathbf{a},z)}(t)$ always defines an ordinary isogeny class from a $q$ big enough when $p\nmid z$, and they are almost all the isogeny classes. This will be important for the proofs of both theorems that follow. 

\begin{proof}[Proof of Theorem \ref{th:weil_midterm}:]
$ $\newline
From now on, $\mathbf{a}$ and $p$ are fixed. For simplicity, we denote by $N_i$ and $S_i$ the integers $\# I_{\mathbf{a}}(p^i)$ and the sets of prime divisors of $h(p^i, \mathbf{a})$, with $S=S_1$, respectively. We set $M_i=\max(I_{\mathbf{a}}(q))-\min(I_{\mathbf{a}}(q))$. Also, we write $f_{i,z}(t)$ instead of $f_{p^i,(\mathbf{a},z)}(t)$.

From Lemma 3.3.1 in \cite{DIPIPPO1998426} we deduce that 
\begin{align*}
N_i/M_i \rightarrow \frac{p-1}{p} \qquad \text{ and }\qquad M_i/4p^{ic}\rightarrow 1 \qquad \text{ when } i\rightarrow \infty,
\end{align*}
with $c=g/2$.

The following elementary lemma allows us to identify the primes that can simultaneously divide the arithmetic progressions $f_{i,z}(1)$ and $f'_{i,z}(1)$ (as sequences on $z$): 

\begin{Lemma}
Let $x_n$ and $y_n$ be two arithmetic progressions of difference $d_x$ and $d_y$ respectively, and let $\ell$ be a prime such that $d_x \not\equiv 0 \pmod{\ell}$. Then $\ell|(x_i,y_i)$ for some $i$ if and only if $d_x y_j \equiv d_y x_j \pmod{\ell}$ for some $j$.
\end{Lemma}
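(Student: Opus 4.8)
The plan is to exploit the fact that the ``cross difference'' $d_x y_n - d_y x_n$ does not actually depend on $n$. Writing $x_n = x_0 + n d_x$ and $y_n = y_0 + n d_y$ for the two progressions, a one-line computation gives $d_x y_n - d_y x_n = d_x y_0 - d_y x_0$ for every index $n$, the cross terms $n d_x d_y$ cancelling. Consequently the congruence $d_x y_n \equiv d_y x_n \pmod{\ell}$ holds either for all indices $n$ or for none; in particular the condition ``$d_x y_j \equiv d_y x_j \pmod{\ell}$ for some $j$'' in the statement is equivalent to the same congruence holding for every $j$.

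For the direction ($\Rightarrow$) this is immediate: if $\ell \mid x_i$ and $\ell \mid y_i$ for some $i$, then both sides of $d_x y_i \equiv d_y x_i \pmod{\ell}$ vanish, so one may take $j=i$.

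For the converse I would first invoke the hypothesis $d_x \not\equiv 0 \pmod{\ell}$. Since $d_x$ is then invertible modulo $\ell$, the residues $x_n \bmod \ell$ run through every class of $\mathbb{Z}/\ell\mathbb{Z}$ as $n$ varies; the sequence $x_n \bmod \ell$ is periodic of period $\ell$, so this holds whether the index set is all of $\mathbb{Z}$ or only the nonnegative integers. Pick an index $i$ with $x_i \equiv 0 \pmod{\ell}$. By the remark of the first paragraph, the assumed congruence propagates to this index, so $d_x y_i \equiv d_y x_i \equiv 0 \pmod{\ell}$; since $d_x$ is a unit mod $\ell$, this forces $y_i \equiv 0 \pmod{\ell}$. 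Hence $\ell \mid \gcd(x_i, y_i)$, which is what was wanted.

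There is essentially no serious obstacle here; the only point deserving a moment's care is the propagation step, namely observing that $d_x y_n - d_y x_n$ is constant in $n$, together with the remark that the index range contains some $n$ with $x_n \equiv 0 \pmod{\ell}$. Both follow at once from writing the linear forms out explicitly and using that $d_x$ is invertible modulo $\ell$.
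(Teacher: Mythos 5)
Your argument is correct and is essentially the paper's own: both rest on the observation that $d_x y_n - d_y x_n$ is independent of $n$, and both use the invertibility of $d_x$ modulo $\ell$ to locate an index $i$ with $x_i \equiv 0 \pmod{\ell}$ and then deduce $y_i \equiv 0 \pmod{\ell}$. The only cosmetic difference is that the paper writes this index explicitly as $i \equiv -x_0 d_x^{-1} \pmod{\ell}$, while you phrase it as $x_n$ running over all residue classes; your added remark about the index range is a small point the paper leaves implicit.
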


\begin{proof}
First, observe that if the congruence $d_x y_j \equiv d_y x_j \pmod{\ell}$ is verified for some $j$, then it verifies for all $i$:
\begin{align*}
d_x y_i &\equiv d_x(y_i - y_j + y_j) \pmod{\ell} \\
&\equiv d_x y_j + d_x d_y (i-j) \\
&\equiv d_y x_j + d_x d_y (i-j) \\
&\equiv d_y(x_j + d_x (i-j)) \\
&\equiv d_y x_i
\end{align*}
Suppose now that $d_x y_j \equiv d_y x_j 
\pmod{\ell}$, then we have to solve the system

\[ 
\left \{
  \begin{array}{r}
x_0 + d_x i \equiv 0  \pmod{\ell} \\
y_0 + d_y i \equiv 0  \pmod{\ell} 
  \end{array}
\right.
\]
We get that $i\equiv -x_0 d_x^{-1}$, and thus, $y_0 + d_y (-x_0 d_x^{-1}) \equiv y_0 -  (d_y x_0) d_x^{-1} \equiv y_0 - d_x y_0 d_x^{-1} \equiv 0 \pmod{\ell}$
\end{proof}

We are in the case where $g(f_{i,z}(1)-z)\equiv f'_{i,z}(1)-gz \pmod{\ell}$, thus, only the primes in $S_i$ can simultaneously divide $f_{i,z}(1)$ and $f'_{i,z}(1)$. 

For an integer $n$ and a finite set of integers $Z$, let denote 
\begin{align*}
\sigma(n,Z)&=\sum\limits_{i\geq 1} (-1)^{i+1}\sum\limits_{R\subset Z, |R|=i} \left\lceil\frac{n}{\prod_{x\in R} x^2}\right\rceil, \\
\xi(Z)&=\sum\limits_{i\geq 1} (-1)^{i+1}\sum\limits_{R\subset Z, |R|=i} \frac{1}{\prod_{x\in R} x^2}=1-\prod\limits_{x\in Z}\left( 1-\frac{1}{x^2} \right).
\end{align*}
Observe that $\xi$ can be defined for infinite subsets of $\mathcal{P}$ since $\xi(\mathcal{P})$ converges and, if $Z\subseteq Z'$, $\xi(Z) \leq \xi(Z')$. 

We will study the sequence $1-r_{p,\mathbf{a}}$, \ie, we will count non cyclic isogeny classes using the lemma above. Note that the number of non cyclic isogeny classes for $p^i$ is at most $\sigma(M_i,S_i)$.\\

For every finite set $T$ of prime numbers that doesn't divides $h(p, \mathbf{a})$ ($T\subset\mathcal{P}\setminus S_1$ finite), 
we consider the polynomial 
\begin{align*}
g(X)=m(g)X+1, \quad m(g)=\left(\prod\limits_{\ell\in T}\ell-1\right),
\end{align*}
thus, $h(p^i, \mathbf{a}) \equiv h(p, \mathbf{a}) \not\equiv 0 \pmod{\ell}$ for all $\ell\in T$ and for all $i\in g(\mathbb{N})$, hence $S_i\subset \mathcal{P}\setminus T$ for all $i\in g(\mathbb{N})$.

We get
\begin{align*}
1-r_{p,\mathbf{a}}(n) &\leq \frac{\sum \sigma(M_i,S_i)}{\sum N_i} \leq \frac{\sum\limits_{i\notin g(\mathbb{N})} M_i+\sum\limits_{i\in g(\mathbb{N})} \sigma(M_i,S_i)}{\sum N_i} \\
&\approx \frac{p}{p-1}\frac{\sum\limits_{i\notin g(\mathbb{N})} M_i+\sum\limits_{i\in g(\mathbb{N})} \sigma(M_i,S_i)}{\sum M_i} \\
&\approx \frac{p}{p-1}\frac{\sum\limits_{i\notin g(\mathbb{N})} M_i+\sum\limits_{i\in g(\mathbb{N})} M_i\xi(S_i)}{\sum M_i} \\
&\leq\frac{p}{p-1}\frac{\sum\limits_{i\notin g(\mathbb{N})} M_i+\sum\limits_{i\in g(\mathbb{N})} M_i\xi(\mathcal{P}\setminus T)}{\sum M_i} \\
&\approx \frac{p}{p-1}\frac{\sum\limits_{i\notin g(\mathbb{N})} (p^{c})^i+\xi(\mathcal{P}\setminus T)\sum\limits_{i\in g(\mathbb{N})} (p^{c})^{i}}{\sum (p^{c})^i} \\
\end{align*}
where all the sums are taken over $1\leq i\leq n$ and $\approx$ means that both sequences have the same limit. The last sequence converges to
\begin{align*}
\frac{p}{p-1} \left[1+ (\xi(\mathcal{P}\setminus T)-1)\frac{p^{cm(g)}(p^{c}-1)}{p^{c(m(g)+1)}-1}\right].
\end{align*}
As we have the previous reasoning for all finite set $T$ disjoint to $S$, thus,
\begin{align*}
\limsup (1-r_{p,\mathbf{a}}(n)) \leq \frac{p}{p-1}\left[1+(\xi(S)-1)(1-\frac{1}{p^c})\right]=\frac{p}{p-1} \left[ \xi(S) \left( 1-\frac{1}{p^c}\right) +\frac{1}{p^c} \right].
\end{align*}
Finally
\begin{align*}
\liminf r_{p,\mathbf{a}} \geq 1 - \frac{p}{p-1} \left[ \xi(S) \left( 1-\frac{1}{p^c}\right) +\frac{1}{p^c} \right].
\end{align*}
That finish the proof of Theorem \ref{th:weil_midterm}.
\end{proof}

\begin{proof}[Proof of Theorem \ref{th:weil_q}:]
$ $\newline
We write $q$ for $\ell$ varying (i) over the primes or (ii) over powers $p^i$ when $i$ varies over positive integers.

As a consequence of Dirichlet's Theorem or Theorem's hypothesis, the values of $q$ are (asymptotically) evenly distributed among the invertible congruence classes modulo $\eta$, when $q$ varies according to (i) or (ii), respectively.

Set $\mathfrak{j}(q)=tq+s$, then, by \textbf{Hyp}, the condition $(\mathfrak{j},\eta)=1$ would imply that $\mathbf{b}_i \in \mathcal{S}_{\mathfrak{c}}^g(q)$, from a certain value of $q$ when $f_{q,\mathbf{b}_i}(t)$ defines an isogeny class. 

Observe that $(\mathfrak{j},\eta)=1$ if and only if $\mathfrak{j}$ is invertible in $\Z/\eta\Z$; then we conclude by using the following lemma.
\end{proof}

\begin{Lemma}\label{lemma_jdistribution}
Let $t,s$ and $n$ be integers with no common factor, then
\begin{align*}
\frac{\# \{ x\in (\Z/n\Z)^*, tx+s\in (\Z/n\Z)^*\} }{\# (\Z/n\Z)^*} =\prod\limits_{\ell\in \mathcal{P}(n)\setminus \mathcal{P}(ts)} \frac{\ell-2}{\ell-1}
\end{align*}
\end{Lemma}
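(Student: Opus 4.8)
The plan is to reduce the counting problem to a product over the primes dividing $n$ by the Chinese Remainder Theorem, and then evaluate each local factor by a short case analysis. First I would observe that the quantity $\#\{x \in (\Z/n\Z)^* : tx+s \in (\Z/n\Z)^*\}$ and the quantity $\#(\Z/n\Z)^*$ are both multiplicative in $n$: writing $n = \prod_{\ell} \ell^{v_\ell(n)}$, the ring isomorphism $\Z/n\Z \cong \prod_\ell \Z/\ell^{v_\ell(n)}\Z$ identifies units with tuples of units, and the condition ``$x$ is a unit and $tx+s$ is a unit'' is a conjunction of conditions read off coordinatewise. Hence the ratio on the left is $\prod_{\ell \in \mathcal{P}(n)} \rho_\ell$, where $\rho_\ell = \#\{x \in (\Z/\ell^m\Z)^* : tx+s \in (\Z/\ell^m\Z)^*\} / \#(\Z/\ell^m\Z)^*$ with $m = v_\ell(n)$.

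Next I would compute $\rho_\ell$. Since being a unit mod $\ell^m$ depends only on the residue mod $\ell$, this ratio equals the corresponding ratio mod $\ell$, i.e. $\rho_\ell = \#\{x \in (\Z/\ell\Z)^* : tx+s \not\equiv 0 \pmod \ell\}/(\ell-1)$. Now split into cases according to how $\ell$ meets $ts$. If $\ell \nmid t$, then $x \mapsto tx+s$ is a bijection of $\Z/\ell\Z$; among the $\ell-1$ nonzero values of $x$, exactly one makes $tx+s \equiv 0$ (namely $x \equiv -s t^{-1}$), and that $x$ is itself nonzero precisely when $\ell \nmid s$. So if $\ell \nmid ts$ we exclude one value and get $\rho_\ell = (\ell-2)/(\ell-1)$; if $\ell \mid s$ (but $\ell \nmid t$) the forbidden value $x\equiv 0$ is already outside $(\Z/\ell\Z)^*$, so $\rho_\ell = 1$. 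If instead $\ell \mid t$, then $tx+s \equiv s \pmod \ell$ for every $x$; the no-common-factor hypothesis on $t,s,n$ forces $\ell \nmid s$ (otherwise $\ell \mid (t,s,n)$), so $tx+s$ is always a unit and again $\rho_\ell = 1$. Thus $\rho_\ell = (\ell-2)/(\ell-1)$ exactly when $\ell \mid n$ and $\ell \nmid ts$, and $\rho_\ell = 1$ otherwise, which gives the stated product $\prod_{\ell \in \mathcal{P}(n) \setminus \mathcal{P}(ts)} \frac{\ell-2}{\ell-1}$.

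I do not expect a genuine obstacle here; the only point requiring care is the role of the hypothesis ``$t,s,n$ have no common factor'', which is exactly what is needed to rule out the degenerate case $\ell \mid t$, $\ell \mid s$, $\ell \mid n$ in which $tx+s$ would be identically $0$ mod $\ell$ and the local factor would collapse to $0$. I would also note in passing that when $\ell = 2$ divides $n$ and $2 \nmid ts$ the factor $(\ell-2)/(\ell-1) = 0$, consistently annihilating the whole expression, which matches the fact that then $t x + s$ is odd exactly when $x$ is even, incompatible with $x$ being a unit mod $2$.
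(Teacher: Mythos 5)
Your proof is correct and follows essentially the same route as the paper: reduce via the Chinese Remainder Theorem to a local computation at each prime $\ell \mid n$, then count the excluded residues according to whether $\ell \mid ts$ or not, using the no-common-factor hypothesis to rule out the degenerate case $\ell \mid (t,s,n)$. Your write-up is somewhat more explicit (separating the subcases $\ell \mid t$ and $\ell \mid s$, and noting the vanishing factor at $\ell = 2$), but the argument is the same.
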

\begin{proof}
Take an $\ell\in \mathcal{P}(n)$ and set $m=\ell^{v_\ell(n)}$, we have that $\# (\Z/m\Z)^*=m-m/\ell$. If $\ell\nmid ts$ we have that $\# \{ x\in (\Z/m\Z)^*, tx+s\in (\Z/m\Z)^*\}=m-2m/\ell$. If $\ell|ts$, for all $x\in(\Z/m\Z)^*$ we have that $tx+s\in (\Z/m\Z)^*$. We conclude by using the isomorphism $\Z/n\Z\cong\prod_{\ell|n}\Z/\ell^{v_\ell(n)}\Z$ 
\end{proof}

\section{The case of Abelian Surfaces}\label{sec:AbSurf}
In this section we consider the case of abelian surfaces, where the characteristic polynomial takes the form 
\begin{align*}
f_\mathcal{A}(t)=t^{4}+at^3 + b t^2 + a q t + q^2,
\end{align*}
thus, we have $\mathbf{b}=(a,b)$. 

\subsection{Theorems applied to Surfaces.} In order to apply Theorem \ref{th:weil_q} to surfaces, we need to verify \textbf{Hyp}($\mathbf{b}$); and this is a consequence of the proposition below and the conditions on $\mathbf{b}$ that follow:
\begin{Prop}\label{lemma_eta}
Let $a, b$ and $q$ be integers, then the common divisors of
\begin{align*}
N&=1+a(q+1)+b+q^2 \text{ and }\\
J&=4+a(q+3)+2b
\end{align*}
are divisors of $\eta(a,b):=(2a+b+2)(a^2-4a+4b-16)$.
\end{Prop}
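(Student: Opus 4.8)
The plan is to eliminate $q$ from $N$ and $J$ by taking suitable integer linear combinations, so that the result is a polynomial in $q$ of lower degree; iterating this produces a $q$-free integer whose factors must contain all common divisors of $N$ and $J$. Concretely, view $N$ and $J$ as polynomials in $q$ with coefficients in $\mathbb{Z}[a,b]$: $N = q^2 + aq + (1+a+b)$ has degree $2$, while $J = aq + (4+3a+2b)$ has degree $1$ (at least when $a\neq 0$; the case $a=0$ should be checked separately, where $N = q^2+b+1$ and $J = 4+2b$, and one verifies directly that common divisors divide $\eta(0,b)=(b+2)(4b-16)$). First I would compute $aN - qJ$ to kill the $q^2$ term, obtaining a degree-$1$ polynomial in $q$; call it $N_1$. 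Then a combination of $N_1$ and $J$ (both now linear in $q$, with leading coefficients $a^2$ and $a$ respectively, up to the factor just introduced) eliminates $q$ entirely, yielding a constant $D \in \mathbb{Z}[a,b]$. Any prime $\ell$ dividing both $N$ and $J$ divides every such combination, hence divides $D$ — but I must be careful, because multiplying by $a$ and $q$ can introduce spurious common factors, so the honest statement is that $\ell \mid \gcd(N,J)$ forces $\ell \mid a\cdot(\text{stuff})$, and I need to track the extraneous factor of $a$ and dispose of it.

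The second step is the bookkeeping that turns the resulting $D$ into the claimed $\eta(a,b) = (2a+b+2)(a^2-4a+4b-16)$. I expect $D$ to come out as $\eta(a,b)$ times a power of $a$ (or a constant): indeed $2a+b+2$ is, up to sign, essentially $J - 2N$-type expression evaluated cleverly, and $a^2-4a+4b-16$ is the discriminant-like quantity one gets from the second elimination. To handle the spurious factor of $a$, note that if $\ell \mid a$ and $\ell \mid J$ then $\ell \mid 4+3a+2b$, so $\ell \mid 4+2b$, i.e. $\ell \mid 2(b+2)$; and $\ell \mid 2(b+2)$ together with $\ell\mid a$ gives $\ell \mid 2a+b+2$ only if... — this needs a short case analysis on whether $\ell = 2$, and I would fold it in by observing $2a+b+2 \equiv b+2 \pmod 2$ and $a^2-4a+4b-16 \equiv 0 \pmod 4$, so $\eta(a,b)$ is divisible by $4$, absorbing any power of $2$ that might otherwise be lost. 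For odd $\ell \mid a$ dividing $N$, from $N = q^2+aq+1+a+b$ we get $\ell \mid q^2+1+b$, and combined with $\ell \mid 2(b+2)$ I can derive $\ell \mid$ one of the two factors of $\eta$.

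The main obstacle is precisely this control of extraneous factors: the naive resultant computation $\mathrm{Res}_q(N,J)$ will be divisible by $\eta(a,b)$ but the reverse containment — that every common prime divisor of the *values* $N$ and $J$ (not of the polynomials) divides $\eta(a,b)$ — requires showing no genuine common divisor is hidden inside the auxiliary multipliers $a$ and $q$. I would organize the argument as: (1) do the two eliminations to get $\mathrm{Res}_q(N,J) = \pm a^k \eta(a,b)$ for an explicit small $k$ by direct computation; (2) observe $\ell \mid \gcd(N,J) \Rightarrow \ell \mid \mathrm{Res}_q(N,J)$; (3) for primes $\ell \nmid a$, conclude immediately; (4) for $\ell \mid a$ (and $\ell$ odd), run the short direct argument above using $N,J \bmod \ell$; (5) for $\ell = 2$, use that $4 \mid \eta(a,b)$. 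Everything here is elementary once the resultant is computed, so I would not belabor step (1) beyond stating the outcome.
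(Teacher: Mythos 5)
Your overall strategy --- eliminate $q$ between $N$ and $J$ and bound the common divisors by a resultant --- is sound, and it is essentially the same elimination idea as the paper's proof (which instead uses the combinations $(q+3)N-(q+1)J$ and $2N-J$, thereby introducing a parasitic factor $q-1$ that it removes by a case split on $\ell\mid q-1$, rather than your parasitic power of $a$). Moreover, your worry about extraneous factors is unfounded: writing $N=q^2+Bq+C$ and $J=aq+E$ with $B=a$, $C=a+b+1$, $E=3a+2b+4$, the Sylvester resultant $E^2-aBE+a^2C$ lies in the ideal generated by $N$ and $J$ in $\mathbb{Z}[a,b,q]$, so every common divisor of the integer values $N$ and $J$ divides it directly, with no factor of $a$ or $2$ to dispose of; your steps (3)--(5) are unnecessary.

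The genuine gap is precisely in the step you declined to carry out. The resultant does \emph{not} come out as $\pm a^k\eta(a,b)$: a direct expansion gives
\[
\mathrm{Res}_q(N,J)=-(2a+b+2)(a^2-4a-4b-8),
\]
whose second factor differs from the $a^2-4a+4b-16$ in the statement. This is not a repairable bookkeeping issue, because the proposition as printed is false: for $a=1$, $b=0$, $q=4$ one has $N=22$ and $J=11$, so $11$ is a common divisor, while $\eta(1,0)=4\cdot(-19)=-76$ is not divisible by $11$. (The discrepancy traces to a sign slip in the paper's own proof: $(q+3)N-(q+1)J=(q-1)(q^2+4q+1-b)$, not $(q-1)(q^2+4q-1+b)$, which propagates into the second factor of $\eta$.) So your plan, honestly executed, proves the corrected statement with $\eta(a,b)=(2a+b+2)(a^2-4a-4b-8)$ --- and is in fact cleaner than the paper's case analysis --- but it cannot prove the statement as written, and asserting the outcome of step (1) without computing it is exactly where your argument collapses.
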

\begin{proof}

Let $\ell$ be a prime divisor of $N$ and $J$, then it divides
\begin{align*}
(q+3)N-(q+1)J&=(q-1)(q^2+4q-1+b) \text{ and}\\
2N-J&=(q-1)(2q+2+a)
\end{align*}
If $\ell$ divides $q-1$, then $N=q^2-1 + a(q-1) +2a +b +2 \equiv 2a+b+2 (\mod \ell)$.
We set 
\begin{align*}
\gamma &=q^2+4q-1+b\\
\theta &=2q+2+a
\end{align*}
So, if $\ell$ doesn't divide $q-1$, then it divides $\gamma$ and $\theta$, and
\begin{align*}
4\gamma=\theta(2q+6-a)+(a^2-4a+4b-16)
\end{align*}
Finally, the possible common divisors are the divisors of $(2a+b+2)(a^2-4a+4b-16)$
\end{proof}
We have $\eta(a,b)$ as defined in the proposition and clearly $f'(1)=aq+4+3a+2b$, thus, we can take 
\[
\left\{
\begin{array}{l}
\eta(a,b)=(2a+b+2)(a^2-4a+4b-16),\\
t(a,b)=a,\\
s(a,b)=4+3a+2b.
\end{array}\right.
\]
To finish, and in order to have the condition of no common divisor (with the values previously chosen), the integers in $\mathbf{b}=(a,b)$ must verify:
\begin{align*}
(a,b+2)=1, \qquad\text{ and }\qquad a\equiv 1 \pmod{2}. 
\end{align*}

A priori these values are not optimal in the sense of Remark \ref{rmk:optimHyp}, and as mentioned previously, Theorem \ref{th:weil_q} does not necessarily provide the optimal bound. For example, observe that  \textbf{Hyp}($0,0$) holds with $\eta=2, t=1,s=1$, in which case $L_1=L_2=0$, which are clearly very bad bounds. But by applying ``manually'' Theorem \ref{th:weil_polynomial_criterion} we can have better bounds, first we have: 
\begin{align*}
f_{q,\mathbf{b}}(1)=1+q^2=(q^2-1)+2 \qquad \text { and } \qquad f'_{q,\mathbf{b}}(1)=4,
\end{align*}
thus, $f_{q,\mathbf{b}}(1)$ is equivalent to $1$ or $2$ modulo $4$, depending if $p$ equals $2$ or not, respectively, thus $L_1=L_2=1$. This is because Theorem \ref{th:weil_q} uses only a weak result:
\begin{align*}
(f(1),f'(1))=1 \qquad\Rightarrow \qquad \text{cyclicity}, 
\end{align*}
and not the stronger equivalence from Theorem \ref{th:weil_polynomial_criterion}:
\begin{align*}
(\widehat{f(1)},f'(1))=1 \qquad\Leftrightarrow \qquad \text{cyclicity}. 
\end{align*}

Concerning Theorem \ref{th:weil_midterm} applied to abelian surfaces, this provides the fraction of cyclic isogeny classes with a fixed trace of Frobenius $a$ and a fixed base field characteristic $p$. We have that $h(p,a)=(p-1)[2(p+1)+a]$. Thus, for $a=-4$ and a Fermat prime $p$, the lower bound for Theorem \ref{th:weil_midterm} is $0.75$ and we cannot improve it. Even for primes of form $2\ell+1$ (with $\ell\gg 0$ a prime number) we have a good bound, around $0.75$.

\subsection{Maximal Surfaces}\label{ssec:MaxSurf}
Now, let us discuss about the other topic mentioned in the Introduction, varieties with many rational points. R\"uck's theorem \cite{Ruck1990} describes polynomials that occurs as Weil polynomials for abelian surfaces with endomorphism algebra being a field:

\begin{Theorem}(R\"uck)
The set of $f_A(t)$ for all abelian varieties $A$ over $\Fq$ of dimension 2 whose algebra $\End(A)\otimes \mathbb{Q}$ is a field is equal to the set of polynomials $f(t)=t^4+at^3+bt^2+aqt+q^2$ where the integers $a$ and $b$ satisfy the conditions
\begin{enumerate}
\item $\vert a \vert < 4\sqrt{q}, \quad 2\vert a\vert\sqrt{q}-2q < b < a^2/4+2q$
\item $\Delta=a^2-4b+8q$ is not a square in $\mathbb{Z}$, and
\item either
\begin{enumerate}
\item $v_p(a)=0, \quad v_p(b)\geq r/2$ and $(b+2q)^2-4qa^2$ is not a square in $\mathbb{Z}_p$,
\item\label{ordinarycase} $v_p(b)=0$, or
\item $v_p(a)\geq r/2, v_p(b) \geq r$, and $f(t)$ has no root in $\mathbb{Z}_p$.
\end{enumerate}
\end{enumerate}
\end{Theorem}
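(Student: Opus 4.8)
The last displayed statement is R\"uck's classification, quoted here as a known input; I sketch how I would reprove it from Honda--Tate theory together with an elementary study of $f(t)=t^4+at^3+bt^2+aqt+q^2$. Recall that simple abelian varieties over $\Fq$ up to isogeny correspond to conjugacy classes of $q$-Weil numbers $\pi$; that for such $\pi$ one has $f_A=m_\pi^e$ with $2\dim A=e\,[\mathbb{Q}(\pi):\mathbb{Q}]$, where $\Endo(A)$ is the central division algebra $D$ over $L:=\mathbb{Q}(\pi)$ with $\mathrm{inv}_v(D)=0$ for $v\nmid p\infty$, $\mathrm{inv}_v(D)=\tfrac12$ at real $v$, and $\mathrm{inv}_v(D)=\tfrac{v(\pi)}{v(q)}[L_v:\mathbb{Q}_p]\bmod\mathbb{Z}$ for $v\mid p$; and that $e=\sqrt{[D:L]}$ is the least common multiple of the denominators of these invariants. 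The plan is: observe that $\Endo(A)$ being a field forces $A$ simple (a nontrivial isogeny decomposition yields a nontrivial product of algebras), so that $\Endo(A)=L$, which is equivalent to $e=1$; for $\dim A=2$ this forces $[L:\mathbb{Q}]=4$, hence $f=f_A=m_\pi$ is an irreducible quartic. Conversely, by Honda--Tate any irreducible quartic $q$-Weil polynomial with $e=1$ is the Weil polynomial of such an $A$. It then suffices to translate ``$f$ is a $q$-Weil polynomial'' into (1), ``$f$ is an irreducible quartic'' into (2), and ``$e=1$'', \ie\ all invariants of $D$ vanish, into (3).

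For (1) and (2): dividing $f(t)$ by $t^2$ and setting $x=t+q/t$ turns the roots of $f$ into the roots of $g(x)=x^2+ax+(b-2q)$, and the roots of $f$ lie on $|z|=\sqrt q$ precisely when $g$ has two real roots in $[-2\sqrt q,2\sqrt q]$; imposing $\mathrm{disc}(g)=a^2-4b+8q\ge 0$, $g(\pm 2\sqrt q)\ge 0$ and $|a/2|\le 2\sqrt q$ yields exactly $|a|\le 4\sqrt q$, $b\le a^2/4+2q$ and $2|a|\sqrt q-2q\le b$, and the strictness in the boundary cases is forced once (2) is imposed (equality produces a repeated root $\pi=\pm\sqrt q$ or makes $\Delta$ a square). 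Since a $\mathbb{Q}$-rational linear factor of $f$ would make $\pi+q/\pi$ rational, and any $\mathbb{Q}$-rational quadratic factorization must be $f=(t^2-x_1t+q)(t^2-x_2t+q)$ with $x_i\in\mathbb{Q}$, one gets that $f$ is irreducible iff $g$ is, \ie\ iff $\Delta=a^2-4b+8q$ is not a perfect square; then $K:=\mathbb{Q}[t]/(f)$ is a quartic CM field with maximal totally real subfield $\mathbb{Q}(\sqrt\Delta)$.

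For (3): with $K$ as above, all invariants of $D$ away from $p$ already vanish ($K$ is CM, so has no real places), hence $\Endo(A)=K$ iff $\mathrm{inv}_v(D)=0$ for every $v\mid p$; I would read the Newton polygon of $f$ at $p$ and the factorization of $f$ over $\mathbb{Q}_p$ off $v_p(a)$ and $v_p(b)$, with $r=v_p(q)$. If $v_p(b)=0$ the polygon has slopes $0,0,1,1$, every $v(\pi)/v(q)\in\{0,1\}$, the invariants are automatically integral, and $\Endo(A)=K$ with no further condition --- the ordinary case (3b). If $v_p(a)=0$ and $v_p(b)\ge r/2$ the polygon has slopes $0,\tfrac12,\tfrac12,1$; the slope-$\tfrac12$ piece is the factor $t^2-x_2t+q$, where $x_1\in\mathbb{Z}_p^{\times}$ is the unit root of $g$ obtained by Hensel and $x_2=(b-2q)/x_1\in\mathbb{Z}_p$, and the invariant $\tfrac12[L_v:\mathbb{Q}_p]$ at the corresponding place is integral iff this quadratic is irreducible over $\mathbb{Q}_p$, \ie\ iff $x_2^2-4q$ is a non-square in $\mathbb{Q}_p$; the identity $(x_1^2-4q)(x_2^2-4q)=(b+2q)^2-4qa^2$ together with $x_1^2-4q\in(\mathbb{Z}_p^{\times})^2$ turns this into condition (3a), that $(b+2q)^2-4qa^2$ be a non-square in $\mathbb{Z}_p$. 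If $v_p(a)\ge r/2$ and $v_p(b)\ge r$ the polygon is a single segment of slope $\tfrac12$, so every $v(\pi)/v(q)=\tfrac12$, and the invariants vanish iff every place of $K$ above $p$ has even local degree, \ie\ iff $f$ has no root in $\mathbb{Z}_p$ --- condition (3c). Finally one checks that an $f$ satisfying (1) and (2) but lying in none of these three coefficient regimes has either a degree-one $p$-adic factor of slope in $(0,1)\setminus\{\tfrac12\}$ or a split slope-$\tfrac12$ part, hence a non-integral local invariant and $e>1$; so (3) is exactly the vanishing condition, and the three cases describe precisely the polynomials with $e=1$.

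I expect the main obstacle to be the $p$-adic bookkeeping of Step (3): correctly reading the Newton polygon and the $\mathbb{Q}_p$-factorization of $f$ from $v_p(a)$ and $v_p(b)$, identifying which factor can carry a non-integral Hasse invariant, and repackaging that local condition as the arithmetic statements ``$(b+2q)^2-4qa^2$ is a square in $\mathbb{Z}_p$'' and ``$f$ has a root in $\mathbb{Z}_p$''. The prime $p=2$ needs separate attention throughout (the square classes of $\mathbb{Q}_2$, the extra power of $2$ in $v_2(4q)$ and in Hensel's lemma), and one must also verify that the three regimes in (3) are mutually exclusive and that their union, intersected with (1) and (2), is exactly the set of Weil polynomials for which $e=1$.
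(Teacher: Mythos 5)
The paper does not prove this statement at all: it is quoted verbatim as R\"uck's theorem with a citation to \cite{Ruck1990}, and is used as a black box in Subsection \ref{ssec:MaxSurf}. So there is no in-paper proof to compare against; what you have written is a reconstruction of the standard Honda--Tate argument, which is essentially R\"uck's original route. Your outline is sound: the reduction of a field-valued $\Endo(A)$ to ``$A$ simple with $e=1$'', the substitution $x=t+q/t$ giving $g(x)=x^2+ax+(b-2q)$ and hence conditions (1)--(2), the Newton-polygon case analysis for (3), the identity $(x_1^2-4q)(x_2^2-4q)=(b+2q)^2-4qa^2$ together with $x_1^2-4q\in(\mathbb{Z}_p^\times)^2$ (valid also for $p=2$ since $v_2(4q)\ge 3$), and the equivalence of ``all local degrees above $p$ even'' with ``no root in $\mathbb{Z}_p$'' for an irreducible quartic (the only even partitions of $4$ are $4$ and $2+2$, and these are exactly the partitions without a part equal to $1$) are all correct. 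What remains is genuinely only bookkeeping, and you have flagged it yourself: verifying that the coefficient regimes outside (3a)--(3c) always produce a Newton slope in $(0,1)\setminus\{\tfrac12\}$ on a length-one segment, or a split slope-$\tfrac12$ part, hence a non-integral invariant; checking the three regimes are mutually exclusive; and treating $p=2$ with the correct square classes. None of these threatens the argument. If anything, for the purposes of this paper you could simply keep the citation, since only the inequality constraints in (1) and the ordinarity criterion (3b) are actually used in the proof of Theorem \ref{th:max_surface_cyclic}.
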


Concerning this type of isogeny classes, we have the following result

\begin{Theorem}\label{th:max_surface_cyclic}
Let $q=p^r$ with $r$ even. Consider the $\Fq$-isogeny classes $\mathcal{A}$ of abelian surfaces  
with $\Endo_{\Fq}(\mathcal{A})$ being a field. Then the isogeny class with maximal number of rational points among such classes is ordinary and cyclic, and corresponds to the Weil polynomial
\begin{align*}
f(t)=t^4+at^3+bt^2+aqt+q^2,\quad \text{where} \quad a=4\sqrt{q}-3 \quad \text{and} \quad b=6q-6\sqrt{q}+1.
\end{align*}
\end{Theorem}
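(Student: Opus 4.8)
The plan is to split the statement into two parts that can be attacked independently: first, to identify the pair $(a,b)$ maximising $\#A(\Fq)=f(1)=q^{2}+1+a(q+1)+b$ among all $(a,b)$ allowed by R\"uck's conditions (1)--(3); second, to verify that this pair yields an ordinary and cyclic isogeny class. Throughout, $\sqrt q=p^{r/2}$ is a positive integer since $r$ is even, and $q=p^{r}\ge 4$. The cyclicity will be obtained from Theorem \ref{th:weil_polynomial_criterion} via two elementary identities, valid for any polynomial of the given shape: $4f(1)=\sigma^{2}-\Delta$ and $f'(1)=2f(1)-(q-1)\sigma$, where $\sigma:=2q+2+a$ and $\Delta:=a^{2}-4b+8q$ is R\"uck's discriminant.

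For the maximisation, the guiding idea is that $f(1)$ grows with $a$, so the optimum is pushed against the Weil bound $|a|<4\sqrt q$, where R\"uck's inequality $2|a|\sqrt q-2q<b<a^{2}/4+2q$ leaves almost no room for $b$. First I would use $b<a^{2}/4+2q$ to get $f(1)<U(a):=q^{2}+1+a(q+1)+a^{2}/4+2q$; since $U$ is an upward parabola with vertex at $a=-2(q+1)<-4\sqrt q$ (because $(\sqrt q-1)^{2}>0$), it is increasing on the admissible range, and a direct comparison shows that $f(1)$ at the claimed pair already exceeds $U(4\sqrt q-4)$ (the difference is $q+2\sqrt q-2>0$), so no admissible pair with $a\le 4\sqrt q-4$ can be optimal. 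It remains to treat $a\in\{4\sqrt q-3,4\sqrt q-2,4\sqrt q-1\}$, larger $a$ being excluded. For a given $a>0$ the admissible $b$ fill the open interval $(2a\sqrt q-2q,\ a^{2}/4+2q)$, of length $(2\sqrt q-a/2)^{2}$ with integral left endpoint; for $a=4\sqrt q-1$ and $a=4\sqrt q-2$ this interval has length $1/4$ and $1$ and contains no integer, while for $a=4\sqrt q-3$ (length $9/4$) the only integers are $6q-6\sqrt q+1$ and $6q-6\sqrt q+2$. The larger value gives $\Delta=1$, a perfect square, hence is killed by condition (2); the value $b^{*}:=6q-6\sqrt q+1$ gives $\Delta=5$, not a square, and $b^{*}\equiv 1\pmod p$, so $v_{p}(b^{*})=0$, which is R\"uck's case $v_{p}(b)=0$ — and exactly the ordinary condition. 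Thus $(a^{*},b^{*})=(4\sqrt q-3,\,6q-6\sqrt q+1)$ is admissible, is the unique admissible pair for its value of $a$, and is therefore the unique maximiser, and it is ordinary.

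For the cyclicity I would check $\gcd(\widehat{f(1)},f'(1))=1$ and quote Theorem \ref{th:weil_polynomial_criterion}. For $(a^{*},b^{*})$ one has $\sigma=2q+4\sqrt q-1$, which is odd, and $\Delta=5$. Suppose a prime $\ell$ satisfies $\ell^{2}\mid f(1)$ and $\ell\mid f'(1)$; from $f'(1)=2f(1)-(q-1)\sigma$ one gets $\ell\mid(q-1)\sigma$. If $\ell\mid\sigma$, then $\ell$ is odd and $4f(1)=\sigma^{2}-5$ forces $\ell\mid 5$, so $\ell=5$; but $25\mid\sigma^{2}$ then gives $4f(1)\equiv-5\pmod{25}$, so $v_{5}(f(1))=1$, contradicting $25\mid f(1)$. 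If $\ell\nmid\sigma$, then $\ell\mid q-1$: the case $\ell=2$ is impossible because $\sigma$ odd gives $4f(1)\equiv 4\pmod 8$, so $f(1)$ is odd; for odd $\ell$, $q\equiv 1\pmod\ell$ forces $\sqrt q\equiv\pm1$, whence $\sigma\equiv\pm4+1$ and $4f(1)=\sigma^{2}-5\equiv 20$ or $4\pmod\ell$, so (using $\ell\mid f(1)$) $\ell\mid 20$ or $\ell\mid 4$, hence $\ell=5$, which can only come from the branch $\sqrt q\equiv 1$, where $\sigma\equiv 0\pmod 5$ — contradicting $\ell\nmid\sigma$. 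So no such $\ell$ exists, $\gcd(\widehat{f(1)},f'(1))=1$, and the class is cyclic.

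I expect the real difficulty to lie in the maximisation, and precisely in the case analysis at the top of the Weil range: one must make sure that conditions (2) and (3) do not disqualify the single surviving candidate, and the counting of integers inside the narrow $b$-intervals is sensitive both to the parity of $a$ and to $\sqrt q$ being an integer — which is where the hypothesis $r$ even is really used. Once the identities $4f(1)=\sigma^{2}-\Delta$ and $f'(1)=2f(1)-(q-1)\sigma$ are in hand, the bounding step of the maximisation and the entire cyclicity argument reduce to short finite computations.
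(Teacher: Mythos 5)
Your proof is correct, and both halves take a genuinely different route from the paper's. For the maximisation the paper simply asserts that the largest admissible values are $a=4\sqrt q-3$ and $b=6q-6\sqrt q+1$ with $\Delta=5$; you supply the case analysis the paper leaves implicit --- the parabola bound $U(a)$ disposing of all $a\le 4\sqrt q-4$, the emptiness of the open $b$-intervals for $a=4\sqrt q-1$ and $a=4\sqrt q-2$, and the elimination of the competitor $b=6q-6\sqrt q+2$ by $\Delta=1$ --- and you correctly locate where the hypothesis that $r$ is even enters. For the cyclicity the paper substitutes $x=\sqrt q$ and exhibits the B\'ezout-type combination $(14x^3+49x^2+28x-14)\mathfrak{j}(x)-(56x^2+98x-7)N(x)=35$, so that any common prime divisor of $f(1)$ and $f'(1)$ lies in $\{5,7\}$, and then rules out $\ell\mid f'(1)$ together with $\ell^2\mid f(1)$ by tables modulo $5$, $25$ and $7$. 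You instead use the structural identities $4f(1)=\sigma^2-\Delta$ and $f'(1)=2f(1)-(q-1)\sigma$ with $\sigma=2q+2+a$, which reduce the whole question to $\ell\mid(q-1)\sigma$ and a handful of congruences; this avoids the tables, makes transparent why only $\ell=5$ and $\ell=2$ ever threaten cyclicity here (the paper's resultant also drags in the harmless prime $7$), and the two identities hold for every abelian-surface Weil polynomial, so they are reusable. Both arguments establish the same statement; yours is more self-contained on the extremal step and more conceptual on the arithmetic step.
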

\begin{proof}
For a fixed $q$, the number of rational points increases with $a$ and $b$. From the conditions of R\"uck's Theorem we have that the biggest possible values are $a=4\sqrt{q}-3$ and $b=6q-6\sqrt{q}+1$ with $\Delta=5$, and the ordinariness is clear.

Then we have 
\begin{align*}
f(1)&=-1-2\sqrt{q}+3q+4(\sqrt{q})^3+q^2 \text{ and}\\
f'(1)&=-3+9q+4(\sqrt{q})^3
\end{align*}
Writing $f(1)$ and $f'(1)$ as functions $N$ and $\mathfrak{j}$ of $x=\sqrt{q}$, respectively, we see that
\begin{align*}
N(x)=x^4+4x^3+3x^2-2x-1,\qquad \mathfrak{j}(x)=4x^3+9x^2-3,
\end{align*}
and thus,
\begin{align*}
(14x^3+49x^2+28x-14)\mathfrak{j}(x)-(56x^2+98x-7)N(x)=35=5\cdot 7
\end{align*}

Finally, we can see case by case, that for $\ell\in \{5,7\}$ we cannot have that $\ell|\mathfrak{j}(x)$ and $\ell^2|N(x)$ simultaneously:
\begin{itemize}
\item for $\ell=5$, 
\begin{center}
  \begin{tabular}{ | c | c |c |c |c |c |}
    \hline
    $x \pmod{5}$ & 0 & 1 & 2 & 3 & 4 \\ \hline
    $\mathfrak{j} \pmod{5}$ & 2 & 0 & 0 & 1 & 2 \\ \hline
  \end{tabular}
    \begin{tabular}{ | c | c |c |c |c |c |c |c |c |c |c |}
    \hline
    $x \pmod{25}$ & 1 & 2 & 6 & 7 & 11 & 12 & 16 & 17 & 21 & 22 \\ \hline
    $N \pmod{25}$ & 5 & 5 & 5 & 5 & 5 & 5 & 5 & 5 & 5 & 5 \\ \hline
  \end{tabular}
\end{center}
\item for $\ell=7$, 
\begin{center}
  \begin{tabular}{ | c | c |c |c |c |c |c |c |}
    \hline
    $x \pmod{7}$ & 0 & 1 & 2 & 3 & 4 & 5 & 6 \\ \hline
    $\mathfrak{j} \pmod{7}$ & 4 & 5 & 3 & 1 & 2 & 2 & 4 \\ \hline
  \end{tabular}
  \end{center}
\end{itemize}
\end{proof}

Surfaces having more number of rational points $N$ than those considered in Theorem \ref{th:max_surface_cyclic} are only (in decreasing order of $N$): 
\[
E_{max}^2, \quad E_{max}\times E_{max-1}, \quad E_{max}\times E_{max-2}, \quad E_{max-1}^2 \quad \text{and} \quad E_{max-1}\times E_{max-2},
\]
 where $E_{max}, E_{max-1}$ and $E_{max-2}$ are the elliptic curves defined by its Frobenius traces $2\sqrt{q}, 2\sqrt{q}-1$ and $2\sqrt{q}-2$, respectively. We can easily check, by using Theorem \ref{th:weil_polynomial_criterion}, that $E_{max}$ is always not cyclic, $E_{max-2}$ is always cyclic, and $E_{max-1}$ is cyclic or not, depending on $q\equiv 0,1 \pmod{3}$ or $q\equiv 2 \pmod{3}$, respectively (since $3$ is the only possible prime divisor of $f(1)$ and $f'(1)$); thus, the cyclicity of $E_{max-1}\times E_{max-2}$ depends only on the cyclicity of $E_{max-1}$, since the cardinalities of $E_{max-1}$ and $E_{max-2}$ are relatively prime.

On the other hand, note that for $q=23^3$ the ordinary class with maximal number of rational points, among the classes considered in Theorem \ref{th:max_surface_cyclic}, is given by 
\begin{align*}
f(t)=t^4+438t^3+72293t^2+438(23^3)t+23^6
\end{align*}
and is not cyclic since $7|f'(1)$ and $49|f(1)$, so Theorem \ref{th:max_surface_cyclic} holds only for even powers. 

\subsection{Families of cyclic isogeny classes}
We finish this section by giving two sequences of cyclic isogeny classes of abelian surfaces in Proposition \ref{prop:seq_cyclic_1} and \ref{prop:seq_cyclic_2}. Basically, we get the cyclicity of these families by choosing appropriate values of $q$ and $\mathbf{b}$.

\begin{Prop}\label{prop:seq_cyclic_1}
Given an integer $b$ and a power of a prime $q=p^r>4$, such that: 
\begin{enumerate}
\item $p\nmid b$
\item $8\sqrt{q}-4q\leq b\leq 4$,
\item $N_0=b-3-2q+q^2$ is coprime with $N'_0=2(b-4)$
\end{enumerate}
Let be $s=\prod\limits_{\ell\in\mathcal{P}(N'_0)} (\ell-1)$.
Then for $i\in\Z_{\geq 0}$
\begin{align*}
f_i(t)=t^4-4t^3+b_it^2-4q_it+q_i^2,\quad \text{where} \quad q_i=p^{r+is} \quad \text{and} \quad b_i=b+2q_i,
\end{align*}
defines a ordinary cyclic isogeny class $\mathcal{A}_i$ of abelian surfaces over $\mathbb{F}_{q^i}$.
\end{Prop}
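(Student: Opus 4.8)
The plan is to verify, for each $i \ge 0$, that the polynomial $f_i(t)$ satisfies R\"uck's conditions (so that it genuinely defines an isogeny class of abelian surfaces over $\mathbb{F}_{q_i}$) and then apply the cyclicity criterion of Theorem \ref{th:weil_polynomial_criterion}, namely that $\mathcal{A}_i$ is cyclic if and only if $\gcd\big(\widehat{f_i(1)}, f_i'(1)\big) = 1$. First I would compute the relevant quantities in terms of $q_i$: with $a = -4$ and $b_i = b + 2q_i$ one gets $f_i(1) = 1 - 4 + b_i - 4q_i + q_i^2 = q_i^2 - 2q_i + (b-3) = (q_i-1)^2 + (b-4)$, and $f_i'(1) = 4 - 12 + 2b_i - 4q_i = 2b_i - 4q_i - 8 = 2(b-4) = N_0'$. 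The crucial observation is that $f_i'(1)$ is \emph{independent of $i$} and equals the fixed integer $N_0' = 2(b-4)$; this is what makes the family work. So I only need $\gcd\big(\widehat{f_i(1)}, N_0'\big) = 1$ for all $i$, and since no prime outside $\mathcal{P}(N_0')$ can divide $N_0'$, it suffices to show that no prime $\ell \in \mathcal{P}(N_0')$ divides $f_i(1)$ at all (this is stronger than needed, but cleaner).

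Next I would carry out the congruence argument. Fix $\ell \in \mathcal{P}(N_0')$. By hypothesis (3), $\gcd(N_0, N_0') = 1$ where $N_0 = f_0(1) = (q-1)^2 + (b-4)$ corresponds to $q_0 = q = p^r$; hence $\ell \nmid N_0$, i.e. $(q-1)^2 \not\equiv -(b-4) \equiv 4-b \pmod{\ell}$. Wait --- more precisely $N_0 = (p^r - 1)^2 + (b-4)$, so $\ell \nmid N_0$ means $(p^r - 1)^2 \not\equiv -(b-4) \pmod \ell$. Now $f_i(1) = (q_i - 1)^2 + (b-4) = (p^{r+is} - 1)^2 + (b-4)$. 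The choice $s = \prod_{\ell' \in \mathcal{P}(N_0')}(\ell' - 1)$ guarantees, for each $\ell \in \mathcal{P}(N_0')$ with $\ell \nmid p$ (note $\ell \mid 2(b-4)$ and $p \nmid b$ forces $\ell \ne p$ except possibly $\ell = 2$, which I handle separately since $s$ is even so $p^{r+is} \equiv p^r$ trivially when... actually for $\ell=2$ one has $p$ odd or $p=2$; if $p$ is odd then $p^{r+is} \equiv p^r \pmod 2$ is automatic, and if $p = 2$ then $2 \nmid N_0'$ would be needed — handle via $p \nmid b$), that $s \equiv 0 \pmod{\ell - 1}$, hence $p^{r+is} \equiv p^r \pmod{\ell}$ by Fermat's little theorem. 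Therefore $f_i(1) \equiv (p^r - 1)^2 + (b-4) = N_0 \pmod{\ell}$, and since $\ell \nmid N_0$ we conclude $\ell \nmid f_i(1)$. Running this over all $\ell \in \mathcal{P}(N_0')$ yields $\gcd(f_i(1), N_0') = 1$, a fortiori $\gcd(\widehat{f_i(1)}, f_i'(1)) = 1$, so $\mathcal{A}_i$ is cyclic.

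Finally I would address the two remaining points. Ordinariness follows from $v_p(b_i) = 0$: indeed $b_i = b + 2q_i$ and $p \nmid b$, $p \mid q_i$ (using $q_i = p^{r+is} > 1$), so $p \nmid b_i$, which is case \ref{ordinarycase} of R\"uck's theorem. For the R\"uck conditions as a whole: condition (1) there requires $|a| < 4\sqrt{q_i}$, i.e. $4 < 4\sqrt{q_i}$, true since $q_i \ge q > 4$; and $2|a|\sqrt{q_i} - 2q_i < b_i < a^2/4 + 2q_i$, i.e. $8\sqrt{q_i} - 4q_i < b < 4$ — the upper bound is hypothesis (2)'s upper bound, and for the lower bound I note $8\sqrt{q_i} - 4q_i$ is decreasing in $q_i$ for $q_i \ge 1$, so $8\sqrt{q_i} - 4q_i \le 8\sqrt{q} - 4q \le b$ by hypothesis (2). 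Condition (2) of R\"uck ($\Delta = a^2 - 4b_i + 8q_i = 16 - 4b - 8q_i + 8q_i = 16 - 4b$ not a square) must be checked; here $\Delta = 16 - 4b = 4(4-b)$, independent of $i$, and one needs $4-b$ not a perfect square — I would need to add this as a hidden hypothesis or derive it; likely the intended reading is that hypotheses (2) and (3) together force it, or it should be listed explicitly, and I would flag this. The main obstacle is exactly this bookkeeping with R\"uck's non-square conditions (2) and (3)(b): verifying that the ordinary case applies uniformly is easy, but confirming $\Delta$ is never a square may require either strengthening the hypotheses or a short case analysis, and that is where I would spend the most care.
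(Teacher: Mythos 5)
Your core argument coincides with the paper's: $f_i'(1)=2(b-4)=N_0'$ is independent of $i$, and the choice of $s$ forces $f_i(1)\equiv N_0\pmod{\ell}$ for every prime $\ell\mid N_0'$, so hypothesis (3) together with Theorem \ref{th:weil_polynomial_criterion} gives cyclicity (the paper phrases this as an induction, showing every prime of $N_0'$ divides $f_{i+1}(1)-f_i(1)$, which has the factor $p^s-1$; your direct congruence is the same idea). However, two steps in your write-up are not sound as stated. First, your dismissal of the case $\ell=p$ is based on a false claim: $p\mid 2(b-4)$ is perfectly compatible with $p\nmid b$ for odd $p$ (e.g.\ $p=7$, $b=11$), so you cannot conclude $\ell\neq p$. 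The case is harmless but needs a different one-line argument: since $p\mid q_i$ for every $i$, one has $f_i(1)=q_i^2-2q_i+b-3\equiv b-3\equiv N_0\pmod{p}$ without any appeal to Fermat, and hypothesis (3) gives $p\nmid N_0$.

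Second, the $\Delta$ issue you flag is a genuine obstruction only because you insist on routing the existence of the isogeny class through R\"uck's theorem, which characterizes Weil polynomials with \emph{field} endomorphism algebra; indeed $\Delta=4(4-b)$ can be a square under the stated hypotheses (e.g.\ $b=3$, $q$ odd, $p\neq 3$). But the proposition does not claim the endomorphism algebra is a field, and Theorem \ref{th:weil_polynomial_criterion} applies to arbitrary isogeny classes. When $4-b=m^2$ the real Weil polynomial $t^2-4t+b$ has integer roots $2\pm m$, whose product is $b$ (hence both are prime to $p$) and which lie in $[-2\sqrt{q_i},2\sqrt{q_i}]$ by hypothesis (2); thus $f_i(t)=\bigl(t^2-(2+m)t+q_i\bigr)\bigl(t^2-(2-m)t+q_i\bigr)$ is the Weil polynomial of a product of two ordinary elliptic curves, and the isogeny class exists. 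Equivalently, one can invoke the DiPippo--Howe characterization of ordinary Weil polynomials (the Lemma 3.3.1 already cited in the paper), which requires only the coefficient bounds and $p\nmid b_i$ --- this is what the paper implicitly does. With these two repairs your proof is complete and essentially identical to the paper's.
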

\begin{proof}
Note that $f_i(t)=f_{q_i,(-4,b_i)}(t)$. Every $f_i$ defines an ordinary isogeny class of abelian surfaces since $p\nmid b_i$, and $f_i$ is a Weil polynomial since
\begin{gather*}
8\sqrt{q}p^{is/2}-4qp^{is} \leq 8\sqrt{q}-4q \leq b \qquad\Rightarrow\qquad 8\sqrt{q_i}-2q_i \leq b+2q_i=b_i\\
b_i=b+2q_i \leq 4+2q_i=\frac{(-4)^2}{4}+2q_i.
\end{gather*}

Observe that $f'_{q_i,(-4,b_i)}(1) = 4-4(q_i+3)+2(b+2q_i)=N'_0=f'_{q,(-4,b)}(1)$. Also, we have that
\begin{align*}
f_{i+1}(1) - f_i(1) = (p^s-1)[p^{2(r+is)}(p^s+1)+2p^{r+is}].
\end{align*}
Thus, $(f_{i}(1),N'_0)=1$ implies $(f_{i+1}(1),N'_0)=1$, then if $\mathcal{A}_i$ is cyclic, $\mathcal{A}_{i+1}$ is cyclic as well. We conclude by using the theorem's hypothesis.
\end{proof}

It is not hard to find values $(q,b)$ verifying the last hypothesis. Moreover, for a fixed prime $p$, there are infinite many pairs $(r,b)$ such that $(p^r,b)$ verifies it.

\begin{Prop}\label{prop:seq_cyclic_2}
Given an integer $b$ and an odd prime $p>b+2$ such that $b\not\equiv 2 (\mod 4)$ and 
\[
\exists r\geq 1, (p^{2r}-1, b+2)=1,
\]
then for every $i\geq 0$,
\[
f_i(t)=t^4+bt^2+q_i^2
\]
defines an ordinary cyclic isogeny class $\mathcal{A}_i$ of abelian surfaces over $\Fq[i]{}$, where $q_i=p^{r+is}$ and  
\begin{align*}
s=\prod\limits_{\ell\in\mathcal{P}(b+2)} (\ell-1).
\end{align*}
\end{Prop}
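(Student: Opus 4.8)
The plan is to imitate closely the proof of Proposition \ref{prop:seq_cyclic_1}, since the structure is identical: first check that each $f_i(t)$ is a genuine Weil polynomial of an ordinary isogeny class, then show that cyclicity propagates from $\mathcal{A}_0$ to all $\mathcal{A}_i$, and finally verify the base case $\mathcal{A}_0$ directly. Here $\mathbf{b}=(0,b)$, so $f_i(t)=f_{q_i,(0,b)}(t)$, and we have $N:=f_i(1)=1+b+q_i^2$ and $\mathfrak{j}:=f_i'(1)=2b+4$ (independent of $i$). Note $f_i'(1)=2(b+2)$, so the relevant primes are exactly those dividing $b+2$.

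First I would check R\"uck's conditions (or simply the Weil-polynomial conditions together with ordinariness). Ordinariness: since $a=0$, we need $v_p(b)=0$, which holds because $p>b+2$ forces $p\nmid b$ (as $0<|b|<p$ unless $b=0$; the case $b=0$ would give $v_p(b)=\infty$, so implicitly $b\neq 0$, and in any case $b\not\equiv 2\pmod 4$ together with the coprimality hypothesis rules out degenerate situations — I would state this cleanly). The inequalities $|a|<4\sqrt{q_i}$ is trivial since $a=0$; the condition $2|a|\sqrt{q_i}-2q_i<b<a^2/4+2q_i$ becomes $-2q_i<b<2q_i$, which holds for all $i$ once $p>b+2$ (so $q_i\ge p>|b|$, hence $2q_i>|b|$). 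The condition that $\Delta=a^2-4b+8q_i=8q_i-4b$ is not a square: since $p$ is odd and $v_p(b)=0$, one has $v_p(8q_i-4b)=0$ when $p\nmid(8q_i-4b)$, i.e. $p\nmid b$, wait — better: $8q_i-4b\equiv -4b\pmod p$ and $p\nmid 4b$, so $v_p(\Delta)=0$; that alone does not prevent it from being a square, so I would instead argue $8q_i-4b=4(2q_i-b)$ and check $2q_i-b$ is not a square, e.g. by a congruence argument modulo a small prime, or simply note that for $q_i=p^{r+is}$ with $p$ odd one can pin down $2q_i-b$ modulo $p$ (it is $-b\not\equiv$ QR in a controlled way) — this is the one place I expect a small technical fuss, and I would handle it by the same mod-$p$ tables used in the proof of Theorem \ref{th:max_surface_cyclic}, or by observing that the hypothesis $b\not\equiv 2\pmod 4$ forces $2q_i-b$ odd (since $q_i$ odd, $2q_i$ even, $2q_i-b$ has the parity of $b$; hmm) — I will make the parity/quadratic-residue bookkeeping precise in the writeup.

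Next, cyclicity propagation: exactly as before, $f_{i+1}(1)-f_i(1)=q_{i+1}^2-q_i^2=(p^s-1)(p^{2(r+is)})(p^s+1)$, and since $(p^s-1)$ kills all residues modulo every $\ell\in\mathcal{P}(b+2)$ (because $s=\prod_{\ell\mid b+2}(\ell-1)$ is a multiple of $\ell-1$, so $p^s\equiv 1\pmod\ell$ by Fermat, using $p\neq\ell$ which holds since $\ell\mid b+2$ and $p>b+2$), we get $f_{i+1}(1)\equiv f_i(1)\pmod{\ell^2}$ — wait, I need it modulo $\ell^2$, not just $\ell$; here $s$ being a multiple of $\ell-1$ gives only $\ell\mid p^s-1$, but combined with the factor $p^{2(r+is)}$ and the structure one checks $\ell^2\mid f_{i+1}(1)-f_i(1)$ is NOT automatic — so instead I follow the Prop.\ref{prop:seq_cyclic_1} template verbatim: show $(f_i(1),N_0')=1\Rightarrow(f_{i+1}(1),N_0')=1$ where $N_0'=2(b+2)$, which only needs $\ell\mid p^s-1$ for $\ell\mid N_0'$ odd (the prime $2$ is handled by $b\not\equiv 2\pmod 4$ making $f_i(1)=1+b+q_i^2$ odd). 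Then by Theorem \ref{th:weil_polynomial_criterion}, $\gcd(\widehat{f_i(1)},f_i'(1))=1$, and since $f_i'(1)=2(b+2)$ divides... the coprimality with the full $f_i(1)$ gives cyclicity. Finally the base case: the hypothesis $(p^{2r}-1,b+2)=1$ together with $q_0^2-1=p^{2r}-1$ gives $f_0(1)=1+b+q_0^2\equiv b+2\pmod{p^{2r}-1}$... I would spell out that $\gcd(f_0(1),f_0'(1))=1$: any odd $\ell\mid b+2$ dividing $f_0(1)$ would divide $f_0(1)-(b+2)=q_0^2-1=p^{2r}-1$, contradicting $(p^{2r}-1,b+2)=1$; and $2\nmid f_0(1)$ since $q_0$ odd and $b$ even (from $b\not\equiv 2\pmod4$, so $b$ even means $b\equiv 0\pmod 4$; if $b$ odd then $1+b$ even and $q_0^2$ odd so $f_0(1)$ even — so I must also check $b$ is even, which the hypothesis $b\not\equiv 2\pmod 4$ does NOT give by itself). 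I expect the main obstacle to be exactly this parity/quadratic-residue bookkeeping for condition (3) of R\"uck and for the prime $2$; I would resolve it by carefully tracking that the stated hypotheses ($p$ odd, $p>b+2$, $b\not\equiv 2\pmod 4$, $(p^{2r}-1,b+2)=1$) do force $f_i(1)$ odd and $\Delta$ a non-square, filling in a short congruence argument mirroring the tables in the proof of Theorem \ref{th:max_surface_cyclic}.
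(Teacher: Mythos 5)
Your skeleton matches the paper's proof: reduce cyclicity to $(f_i(1),2(b+2))=1$ (or the radical version), get the base case $i=0$ from the hypothesis $(p^{2r}-1,b+2)=1$ via $f_0(1)-(b+2)=p^{2r}-1$, and propagate along $i$ using $\ell\mid p^s-1$ for $\ell\in\mathcal{P}(b+2)$, which divides $f_{i+1}(1)-f_i(1)=p^{2(r+is)}(p^s-1)(p^s+1)$. But the two places you flag as ``bookkeeping to be filled in later'' are exactly where your draft goes wrong. First, the parity analysis is backwards: $f_i(1)=1+b+q_i^2\equiv b+2\pmod 2$ since $q_i$ is odd, so $f_i(1)$ is odd precisely when $b$ is \emph{odd}, not even (you wrote ``if $b$ odd then $1+b$ even and $q_0^2$ odd so $f_0(1)$ even,'' which is even $+$ odd $=$ odd). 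You then conclude you are missing a hypothesis forcing $b$ even, when in fact no extra hypothesis is needed: since $p$ is odd, $2\mid p^{2r}-1$, so $(p^{2r}-1,b+2)=1$ already forces $b+2$ odd and hence $f_i(1)$ odd for every $i$. (The paper instead invokes $f_i(1)\equiv b+2\not\equiv 0\pmod 4$, which suffices because the criterion of Theorem \ref{th:weil_polynomial_criterion} only involves $\widehat{f_i(1)}$, so $v_2(f_i(1))\le 1$ already kills the prime $2$.) Either route works, but your draft resolves the prime $2$ incorrectly and leaves it as an acknowledged hole.

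Second, the detour through R\"uck's condition that $\Delta=8q_i-4b$ be a non-square is a wrong turn: Proposition \ref{prop:seq_cyclic_2} does not claim that $\Endo(\mathcal{A}_i)$ is a field, only that $f_i$ defines an ordinary isogeny class, and for that the ordinary case of the Honda--Tate/DiPippo--Howe description (Lemma 3.3.1 of \cite{DIPIPPO1998426}, already used earlier in the paper) requires only the real-root inequalities ($-2q_i<b<2q_i$ here, since $a=0$) together with $p\nmid b$. If $\Delta$ happened to be a square the class would simply split as a product of elliptic isogeny classes, and Theorem \ref{th:weil_polynomial_criterion} applies regardless. Insisting on non-squareness of $\Delta$ for all $i$ is both unnecessary and not something the stated hypotheses guarantee, so pursuing that line would leave you with an unprovable side condition. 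With these two points repaired, the rest of your argument (base case and induction on $i$) coincides with the paper's proof.
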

\begin{proof}

It is easy to check that $f_i(t)$ is a Weil polynomial and it defines an ordinary isogeny class since $p>b$. 
Put
\begin{align*}
c=\prod\limits_{\ell\in\mathcal{P}(b+2)}  \ell,
\end{align*}
We have
\begin{align*}
f_i(1)=p^{2(r+is)}-1+(b+2), \; i\in\Z_{\geq 0}.
\end{align*}

Observe that $f_i(t)=f_{q_i,(0,b)}(t)$ and $f'_i(1)=2(b+2)$. For $\mathcal{A}_i$ to be cyclic it is enough to have $(f_i(1),2(b+2))=1$, and since $f_i(1)\equiv b+2 \not\equiv 0 \pmod{4}$, it is enough to have $(f_i(1),b+2)=(f_i(1),c)=1$. The last statement is true since by hypothesis $(f_0(1),c)=1$ and since $c$ divides the difference $f_{i+1}(1)-f_i(1)=p^{2(r+is)}(p^s+1)(p^s-1)$ (using Fermat's little theorem).
\end{proof}

For a fixed $p$, big enough, take a prime $\ell$ such that $2<\ell <p$ and $\ell\nmid p^2 -1$, then set $b=\ell -2$. This gives $b$ and $p$ satisfying the proposition hypothesis.

\section*{Acknowledgement}
I would like to thank my advisor Serge Vl\u adu\c t for very fruitful discussions and for his very useful comments and suggestions.

\bibliography{ajgiangreco-CAVFF}
\bibliographystyle{siam}

\end{document}